\newtheoremstyle{mystyle}
{11pt}				
{11pt}				
{\itshape}					
{}					
{\bfseries}			
{}					
{5.5pt}				
{}					
\newtheoremstyle{mystyle2}
{11pt}				
{11pt}				
{}					
{}					
{\bfseries}			
{}					
{5.5pt}				
{}					
\theoremstyle{mystyle}
\newtheorem{theorem}{Theorem}[section]
\newtheorem{definition}[theorem]{Definition}
\newtheorem{proposition}[theorem]{Proposition}
\newtheorem{corollary}[theorem]{Corollary}
\theoremstyle{mystyle2}
\newtheorem{example}[theorem]{Example}
\renewenvironment{proof}[1][Proof.]{\vspace{-11pt} \begin{trivlist}
		\item[\hskip \labelsep {\bfseries #1}]}{\qed \end{trivlist}}
\newcommand{\gap}{\vspace{11pt}}
\newcommand{\smallgap}{\vspace{5.5pt}}
\newcommand{\diag}{\operatorname{diag}}
\newcommand{\Diag}{\operatorname{Diag}}
\newcommand{\tr}{\operatorname{tr}}
\newcommand{\Aut}{\operatorname{Aut}}
\newcommand{\Lie}{\operatorname{Lie}}
\newcommand{\C}{\mathbb{C}}
\newcommand{\R}{\mathbb{R}}
\newcommand{\Sn}{\mathcal{S}^n}
\newcommand{\one}{{\bf 1}}
\newcommand{\V}{{\cal V}}
\newcommand{\U}{{\cal U}}
\newcommand{\Z}{{\cal Z}}
\newcommand{\W}{{\cal W}}
\newcommand{\G}{{\cal G}}
\newcommand{\lc}{\lambda(c)}
\newcommand{\ly}{\lambda(y)}
\newcommand{\lx}{\lambda(x)}
\newcommand{\lu}{\lambda(u)}
\newcommand{\barx}{\overline{x}}
\newcommand{\ip}[2]{\left< #1, #2 \right>}
\title{\bf  Some commutation principles for  optimization problems over transformation groups and  semi-FTvN systems
}
\author{
	M. Seetharama Gowda\\
	Department of Mathematics and Statistics\\
	University of Maryland, Baltimore County\\
	Baltimore, Maryland 21250, USA\\
	gowda@umbc.edu\\and\\
 David Sossa\\
Instituto de Ciencias de la Ingenier\'{i}a\\
Universidad de O’Higgins\\
Rancagua, Chile\\
david.sossa@uoh.cl}
\date{\today}
\begin{document}

\maketitle

\begin{abstract}
	We introduce the concepts of {\it commutativity} relative to a transformation group and {\it strong commutativity} in the setting of a semi-FTvN system and show their appearance as optimality conditions in certain optimization problems. In the setting of a semi-FTvN system (in particular, in an FTvN system), we show that strong commutativity implies commutativity and observe that in the special case of Euclidean Jordan algebra, commutativity and strong commutativity concepts reduce, respectively, to those of operator and strong operator commutativity. We demonstrate that every complete hyperbolic polynomial induces a semi-FTvN system. By way of an application, we describe several commutation principles. 
\end{abstract}

\gap

\noindent{\bf Key Words}: Transformation group, semi-FTvN system,  (strong) commutativity, commutation principle, Euclidean Jordan algebra, hyperbolic polynomial

\smallgap

\noindent{\bf AMS Subject Classification:}  15A27, 17C20, 46N10,
90C33.

\section{Introduction}\label{intro} The main objective of the paper is to describe two commutativity concepts and show their relevance in certain commutation principles. 

For the first commutativity concept, we
consider a real inner product space $\V$ and the corresponding space $\mathrm{ L(\V)}$ of all bounded linear transformations equipped with the operator norm. Within $\mathrm{ L(\V)}$, let  $\mathrm{GL(\V)}$ denote the set of all invertible linear transformations. Given a closed subgroup ${\cal G}$ of $\mathrm{GL(\V)}$, let 
\begin{equation}\label{lie algebra defn}
\mathrm{Lie({\cal G}):=\{D\in \mathrm{ L(\V)}: \exp(tD)\in {\cal G}\,\,\mbox{for all}\,\,t\in \R\}},
\end{equation}
where, for a given $D$,  $\mathrm{\exp(tD)}$ denotes the sum of the power series $\sum_{k=0}^{\infty}\frac{t^kD^k}{k!}$ that converges uniformly on bounded sets of $\R$ to an element in $\mathrm{ L(\V)}$.
(When $\V$ is finite-dimensional, $\mathrm{Lie({\cal G})}$ is the Lie algebra of the matrix Lie group ${\cal G}$.) 
Given elements $a$ and $b$ in $\V$, we say that  
\begin{center}
    {\it  $a$  commutes with $b$  relative to ${\cal G}$ if 
$\langle Da,b\rangle=0$\,\,\mbox{for all}\,\,$\mathrm{D\in \Lie({\cal G})}$.}
\end{center}

   As we see below, in the setting of a Euclidean Jordan algebra $\V$ with its (algebra) automorphism group ${\cal G}$, this definition is equivalent to that of operator commutativity.
   In particular, for real symmetric matrices (or complex Hermitian matrices) $A$ and $B$, this reduces to the usual commutativity $AB=BA$. \\
   
   The motivation for introducing the above concept comes from the so-called commutation principles \cite{seeger-orbital, ramirez et al, gowda-jeong-siopt, gowda-commutation-ORL, jeong-sossa-arxiv2024},  where commutativity appears as an optimality condition in certain optimization problems.  The following result - to be elaborated and proved later - provides an illustration:
\begin{theorem}\label{theorem in introduction}
    Consider $\V$ and ${\cal G}$ as above. Let $\Theta:\V\rightarrow \R$ be a Fr\'{e}chet differentiable function and $F:\V\rightarrow \R$ be a ${\cal G}$-invariant function. Let $E$ be a set in $\V$ that is ${\cal G}$-invariant. If $a$ is a local optimizer of the problem
    $$\underset{x\in E}{min/max}\quad \Theta(x)+F(x),$$
    then $a$ commutes with $\Theta^{\prime}(a)$  relative to ${\cal G}.$
\end{theorem}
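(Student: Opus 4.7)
The plan is to exploit the ${\cal G}$-invariance of both $F$ and $E$ to produce, for each $D\in \mathrm{Lie({\cal G})}$, a smooth curve through $a$ that lies entirely in $E$ and along which $F$ is constant. On such a curve, the local extremality of $\Theta+F$ at $a$ collapses to a local extremality of $\Theta$, and differentiating at $t=0$ yields precisely the vanishing inner product defining commutativity relative to ${\cal G}$.

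Concretely, I would fix $D\in \mathrm{Lie({\cal G})}$ and set $\gamma(t):=\exp(tD)\,a$ for $t\in \R$. By the definition of $\mathrm{Lie({\cal G})}$ given in (\ref{lie algebra defn}), $\exp(tD)\in {\cal G}$ for every $t\in \R$; the ${\cal G}$-invariance of $E$ then forces $\gamma(t)\in E$, while the ${\cal G}$-invariance of $F$ gives $F(\gamma(t))=F(a)$ for all $t$. Since $\gamma$ is continuous with $\gamma(0)=a$, the points $\gamma(t)$ lie in any prescribed neighborhood of $a$ once $t$ is close to $0$, and therefore the scalar function
\[ \phi(t):=\Theta(\gamma(t))+F(\gamma(t))=\Theta(\gamma(t))+F(a) \]
inherits a local extremum at $t=0$ from the assumed local optimality of $a$.

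Next, I would compute $\gamma'(0)=Da$ by differentiating the power series $\sum_{k\geq 0} t^kD^ka/k!$ term-by-term at $t=0$, which is legitimate because of the uniform convergence on bounded sets of $\R$ recorded in the definition of $\exp(tD)$. Combined with Fr\'echet differentiability of $\Theta$, the chain rule yields
\[ 0 = \phi'(0) = \langle \Theta'(\gamma(0)), \gamma'(0) \rangle = \langle \Theta'(a), Da \rangle. \]
Since $D\in \mathrm{Lie({\cal G})}$ was arbitrary, this is precisely the assertion that $a$ commutes with $\Theta'(a)$ relative to ${\cal G}$.

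The argument is largely formal once the one-parameter subgroup $\{\exp(tD)\}_{t\in\R}\subset {\cal G}$ is brought in, so there is no substantial obstacle in the usual sense. The only checkpoint deserving care is the identity $\gamma'(0)=Da$, which must be extracted from the power-series definition of $\exp(tD)$; this is essentially bookkeeping rather than a genuine difficulty. A secondary point worth flagging is that the same argument makes no distinction between \emph{min} and \emph{max}, as both cases imply $\phi'(0)=0$ by elementary single-variable calculus.
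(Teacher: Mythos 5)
Your proof is correct and follows essentially the same route as the paper's (which treats the formal version in Theorem~\ref{group commutation principle}): flow along the one-parameter subgroup $t\mapsto\exp(tD)a$, use ${\cal G}$-invariance of $E$ and $F$ to confine the flow to $E$ and freeze $F$, then take the derivative of $t\mapsto\Theta(\exp(tD)a)$ at $t=0$. The only cosmetic difference is that you handle the min and max cases in one stroke via ``local extremum implies $\phi'(0)=0$,'' whereas the paper argues the minimizer case and then reduces the maximizer case by negating $\Theta+F$.
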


Our second commutativity concept is defined in the setting of a semi-FTvN system. A semi-FTvN system is a triple $(\V,\W,\lambda)$, where $\V$ and $\W$ are real inner product spaces and $\lambda:\V\rightarrow \W$ is a mapping that is ``norm-preserving" and "inner product expanding":

\begin{equation} \label{semi-ftvn}
    ||\lambda(x)||=||x||\,\,\mbox{and}\,\, 
\big\langle x,y\big\rangle\leq \langle\lambda(x),\lambda(y)\big\rangle\quad(\forall\, x,y\in \V).
\end{equation}

In a semi-FTvN system $(\V,\W,\lambda)$, we define {\it strong commutativity} of elements $x$ and $y$ in $\V$ by the condition 
\begin{equation}\label{strong commutativity defn}
\langle x,y\rangle=\langle \lambda(x),\lambda(y)\rangle.
\end{equation}
For commutativity in this system, we consider the group $\mathrm{Aut(\V,\W,\lambda)}$ of all invertible bounded linear transformations $A:\V\rightarrow \V$ satisfying the condition $\lambda(Ax)=\lambda(x)$ for all $x\in \V$. As $\lambda$ is norm-preserving, this is a closed subgroup of the orthogonal group of $\V$. Considering a closed subgroup ${\cal G}$ of $\mathrm{Aut(\V,\W,\lambda)}$, we define $\mathrm{Lie({\cal G})}$ by (\ref{lie algebra defn}) and formulate the definition of commutativity relative to ${\cal G}$ as we did earlier. We shall see that 
\begin{itemize}
\item 
{\it In a semi-FTvN system, strong commutativity implies commutativity (relative to ${\cal G}$).}
\end{itemize}

 How do semi-FTvN systems arise? First, a semi-FTvN system is a generalization of the so-called FTvN system. An FTvN system (short for Fan-Theobald-von Neumann system) is a triple    $(\V,\W,\lambda)$, where $\V$ and $\W$ are real inner product spaces and  $\lambda:\V\rightarrow \W$ is a mapping satisfying the condition 
\begin{equation} \label{ftvn defn}
	\max \Big\{\! \ip{c}{x} : \, x \in [u] \Big\} = \ip{\lc}{\lu} \quad (\forall\, c, u \in \V)
\end{equation}
with $[u]:=\{x\in \V: \lambda(x)=\lambda(u)\}$ denoting the so-called $\lambda$-orbit of $u$.  It is shown in \cite{gowda-jeong-ftvn}, Section 3, that in such a system, the mapping $\lambda$ satisfies (\ref{semi-ftvn}); thus, every FTvN system is a semi-FTvN system. Numerous examples and properties of FTvN systems are described in the articles \cite{gowda-ftvn, gowda-jeong-ftvn, jeong-gowda-ftvn, ito-lorenzo}. A simple example of an FTvN system is the triple $(\V,\R,\lambda)$, where $\V$ is a real inner product space and $\lambda(x)=||x||$. Non-trivial examples include  
 {\it Euclidean Jordan algebras},  systems induced by {\it complete and isometric hyperbolic polynomials}, and {\it normal decomposition systems (Eaton triples)}, see \cite{gowda-ftvn}.  In the setting of a Euclidean Jordan algebra (carrying the trace inner product), the strong commutativity relation (\ref{strong commutativity defn}) reduces to the {\it strong operator commutativity} of $x$ and $y$ which means that   $x$ and $y$ have their spectral decompositions with respect to a common Jordan frame where their coefficients (eigenvalues) appear in the decreasing order \cite{baes}.  This commutativity is stronger than the {\it operator commutativity} where (we merely require that) two elements have their spectral decompositions with respect to a common Jordan frame. We show, by rephrasing a recent result in \cite{jeong-sossa-arxiv2024}, that 
 \begin{itemize}
\item {\it In a Euclidean Jordan algebra, commutativity relative to the (algebra) automorphism group is equivalent to operator commutativity.}
\end{itemize}

The concept of strong commutativity (\ref{strong commutativity defn}) was initially studied in the setting of FTvN systems (where it was just called commutativity). 
 For example, it was shown in  \cite{gowda-ftvn}, Section 3.1, that in a FTvN system $(\V,\W,\,\lambda)$, for any $c\in \V$, $\phi:\W\rightarrow \R$, and any spectral set $S$ (which is a set of the form $\lambda^{-1}(Q)$ for some $Q\subseteq \W$) in $\V$,
\begin{equation}\label{eq: linear sup}
\sup_{x\in S} \Big\{\! \ip{c}{x} + (\phi \circ \lambda)(x) \Big\} = \sup_{u \in \lambda(S)} \Big\{\! \ip{\lc}{u} + \phi(u) \Big\}
\end{equation} 
with the attainment of one supremum implying the attainment of the other and additionally implying a  relation of the form 
(\ref{strong commutativity defn}). Furthermore,   it was shown in \cite{jeong-gowda-ftvn} that under certain conditions, 
 (\ref{ftvn defn}) is equivalent to the Fenchel conjugate formula 
$ (\phi \circ \lambda)_{S}^{\ast}(z) = \phi_{\lambda(S)}^{\ast}\big( \lambda(z) \big) $
and to the subdifferential relation 
\begin{equation}
y \in \partial_{S} (\phi \circ \lambda)(\barx) \Longleftrightarrow \ly \in \partial_{\lambda(S)} \phi \big( \lambda(\barx) \big) \text{ and $y$ strongly commutes with $\barx$}. 
\end{equation}

\gap

Semi-FTvN systems also arise from complete hyperbolic polynomials. We recall \cite{garding, bauschke et al} that given  a finite-dimensional real vector space $\V$ and a nonzero element  $e\in \V$, a real homogeneous polynomial $p$ 
of degree $n$ on $\V$ is said to be {\it hyperbolic relative to $e$} if $p(e)\neq 0$ and
for every $x\in \V$,   the roots of the univariate polynomial $t\rightarrow p(te-x)$ are  
real. Given such a polynomial $p$,  for any $x\in \V$, let $\lambda(x)$  denote the vector
of roots of the polynomial $t\rightarrow p(te-x)$  with entries written in decreasing
order. We say that $p$ is complete if $\lambda(x)=0\Rightarrow x=0$. Based on a construction due to Bauschke et al \cite{bauschke et al}, we show that 

\begin{itemize}
\item {\it Every complete hyperbolic polynomial induces a semi-FTvN system.}
    \end{itemize}

In addition to Theorem \ref{theorem in introduction} and the bulleted items above, we state and prove several commutation principles.

The organization of the paper is as follows. In Section 2, we introduce the concept of commutativity relative to a transformation group and provide some examples. Section 3 and its various subsections deal with semi-FTvN systems, automorphisms, and commutativity concepts, and the result that strong commutativity implies commutativity. In this section, we also introduce the concepts of center and unit element, and provide numerous examples.  In Section 4, we consider the FTvN system induced by a Euclidean Jordan algebra. Here, we show that commutativity relative to the 
(algebra) automorphism group is equivalent to operator commutativity. In Section 5, we show that a complete hyperbolic polynomial induces a semi-FTvN system. 
In our final section, we describe several commutativity principles. These can be regarded as analogs of earlier results stated in the setting of FTvN systems that describe strong commutativity relations.  
\section{Commutativity relative to a transformation group}
Consider a real inner product space $\V$ and the space $\mathrm{ L(\V)}$ of all bounded linear transformations on $\V$  equipped with the operator norm. Within $\mathrm{ L(\V)}$, let  $\mathrm{GL(\V)}$ denote the set of  all invertible linear transformations. A closed subgroup of $\mathrm{GL(\V)}$ is said to be a {\it transformation group} (a matrix Lie group \cite{hall} when $\V$ is finite dimensional). Given such a group ${\cal G}$, we consider $\Lie({\cal G})$ defined in (\ref{lie algebra defn}).
\begin{definition}
   Let $\V$ and ${\cal G}$ be as above and $a,b\in \V$.  We say that  $a$ commutes with $b$ relative to ${\cal G}$ if 
    $\langle Da,b\rangle=0\,\,\mbox{for all}\,\,D\in \Lie({\cal G}).$
    \end{definition}
We remark that the above definition is not symmetric in $a$ and $b$. However, when ${\cal G}$ is a closed subgroup of the orthogonal group of $\V$ (consisting of elements of $\mathrm{GL(\V)}$ that are inner product preserving), from the relation
$\langle e^{tD}x,e^{tD}x\rangle=\langle x,x\rangle$, we get $\langle Dx,x\rangle=0$ and \begin{equation}\label{skew-symmetry}
\langle Dx,y\rangle+\langle x,Dy\rangle=0\,\,\mbox{for all}\,\,x,y\in \V.
\end{equation}
Consequently,
in this setting, {\it if $a$  commutes with $b$ relative to ${\cal G}$, then $b$ commutes with $a$ relative to ${\cal G}$}. When this happens, we say that {\it $a$ and $b$ commute relative to ${\cal G}$.}

\gap

 While the above definition is formulated in the setting of any real inner product space, we will be mostly concerned with finite-dimensional real inner product spaces and matrix Lie groups. 
For matrix Lie groups, we have the following result. Here, $I$ denotes the Identity transformation/matrix, $(-\delta,\delta)$ is an open interval in $\R$ and for a differentiable function $\gamma$, $\gamma^\prime$ denotes the derivative. This result shows that one can define commutativity via the tangent space/cone at the identity in ${\cal G}$.

\begin{theorem} \label{tangent cone version of lie algebra} Let ${\cal G}$ be a matrix Lie group over a  finite-dimensional real inner product space $\V$. For any $D\in \mathrm{L(\V)}$, the following are equivalent:
\begin{itemize}
    \item [(a)] $D\in \Lie({\cal G})$, that is, 
     $\exp(tD)\in {\cal G}$ for all $t\in \R$.
    \item [(b)] $D$ belongs to the tangent space of ${\cal G}$ at $I$, that is, there exists a smooth/differentiable curve $\gamma: (-\delta,\delta)\rightarrow {\cal G}$ such that 
    $\gamma(0)=I\quad\mbox{and}\quad \gamma^{\prime}(0)=D.$
    \item [(c)] $D$ belongs to the Bouligand tangent cone of ${\cal G}$ at $I$, that is, there exist sequences $\{D_k\}$ in ${\cal G}$ and $\{t_k\}$ in $\R$ such that $D_k\rightarrow I$, $t_k\downarrow 0$ and $\frac{D_k-I}{t_k}\rightarrow D$.
\end{itemize}
Thus,
\begin{center}
{\it     $\Lie({\cal G})$ = Tangent space of ${\cal G}$ at $I$= (Bouligand) tangent cone of ${\cal G}$ at $I$.
}
\end{center}
\end{theorem}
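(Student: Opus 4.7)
The plan is to establish the three equivalences cyclically: $(a)\Rightarrow(b)\Rightarrow(c)\Rightarrow(a)$. The first two implications will be short, while the last amounts to the classical closed subgroup argument for matrix Lie groups.

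For $(a)\Rightarrow(b)$, I would simply exhibit the smooth curve $\gamma(t):=\exp(tD)$, which lies in ${\cal G}$ by hypothesis, satisfies $\gamma(0)=I$, and whose derivative at $0$ is $D$ by termwise differentiation of the defining series for $\exp$. For $(b)\Rightarrow(c)$, given a smooth $\gamma:(-\delta,\delta)\to{\cal G}$ with $\gamma(0)=I$ and $\gamma'(0)=D$, I would fix any sequence $t_k\downarrow 0$ and set $D_k:=\gamma(t_k)\in{\cal G}$; then continuity of $\gamma$ gives $D_k\to I$, and the definition of derivative gives $(D_k-I)/t_k\to D$.

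The substantive step is $(c)\Rightarrow(a)$. Given sequences $D_k\in{\cal G}$ and $t_k\downarrow 0$ with $D_k\to I$ and $(D_k-I)/t_k\to D$, I fix $s\in\R$ and set $n_k:=\lfloor s/t_k\rfloor$, so that $n_k t_k\to s$. Since ${\cal G}$ is a subgroup, $D_k^{n_k}\in{\cal G}$ for every $k$. My aim is to show $D_k^{n_k}\to\exp(sD)$, after which closedness of ${\cal G}$ will force $\exp(sD)\in{\cal G}$, yielding (a). For $k$ large enough that $\|D_k-I\|<1$ the matrix logarithm is defined at $D_k$, and the expansion $\log(I+H)=H-\tfrac{1}{2}H^2+\tfrac{1}{3}H^3-\cdots$ yields
\[
n_k\log D_k \;=\; (n_k t_k)\cdot\frac{D_k-I}{t_k} \;+\; n_k\cdot O\!\left(\|D_k-I\|^2\right).
\]
The first term converges to $sD$; the error term vanishes because
\[
n_k\|D_k-I\|^2 \;=\; (n_k t_k)\cdot t_k\cdot\left\|\tfrac{D_k-I}{t_k}\right\|^2 \;\longrightarrow\; s\cdot 0\cdot\|D\|^2 \;=\; 0.
\]
Hence $n_k\log D_k\to sD$, and continuity of $\exp$ delivers $D_k^{n_k}=\exp(n_k\log D_k)\to\exp(sD)$.

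The main obstacle, as expected, is this last step, which passes from an infinitesimal ``tangent cone at the identity'' datum to a global one-parameter subgroup sitting inside ${\cal G}$. It relies critically on the closedness of ${\cal G}$ (to keep the limit in ${\cal G}$), on $\exp$ and $\log$ being mutual local inverses near $I$, and on the finite-dimensionality of $\V$, which ensures that the operator-norm power-series bounds for $\exp$ and $\log$ behave uniformly on bounded sets. Once the three implications are in hand, the closing identification of $\Lie({\cal G})$ with the tangent space and Bouligand tangent cone at $I$ is immediate.
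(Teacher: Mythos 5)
The paper itself does not actually prove this theorem: it discharges $(a)\Leftrightarrow(b)$ by citing Hall (Corollary 2.35) and Stillwell, and $(b)\Leftrightarrow(c)$ by citing Jeong--Sossa. Your proposal supplies a self-contained argument where the paper relies on references, so strictly speaking you take a ``different route'' only in the sense that you carry out what the cited sources do.

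Your cyclic chain is correct. The implications $(a)\Rightarrow(b)$ (take $\gamma(t)=\exp(tD)$) and $(b)\Rightarrow(c)$ (take $D_k=\gamma(t_k)$) are immediate. For the substantive step $(c)\Rightarrow(a)$ you give the standard $\log$--$\exp$ argument from the proof of the closed subgroup theorem: set $n_k=\lfloor s/t_k\rfloor$, expand $\log D_k$, and use $\|D_k-I\|=t_k\,\|(D_k-I)/t_k\|$ to see that the quadratic remainder times $n_k$ tends to $0$, so $n_k\log D_k\to sD$ and hence $D_k^{n_k}=\exp(n_k\log D_k)\to\exp(sD)$, which lies in ${\cal G}$ by closedness. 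This is sound; in particular the case $s<0$ (where $n_k<0$) is covered because $D_k$ is invertible and ${\cal G}$ is a subgroup of $\mathrm{GL}(\V)$, and the uniformity of the $O(\|D_k-I\|^2)$ bound is legitimate once $\|D_k-I\|$ is bounded away from $1$, which holds for large $k$. In short, your proof is correct and is essentially the textbook argument the paper points to; what you add is self-containedness, at the cost of length.
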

In the above, the equivalence of $(a)$ and $(b)$ is well-known, see e.g., \cite{hall}, Corollary 2.35 and \cite{stillwell}, pages 93 and 104. \\The equivalence of $(b)$ and $(c)$ is given in 
\cite{jeong-sossa-arxiv2024}. \\

Suppose $\V$ is a finite-dimensional real inner product space. For any  $A\in \mathrm{L(\V)}$, let $A^T$ denote the transpose of $A$ defined by $\langle A^Tx,y\rangle =\langle x,Ay\rangle$; let the trace of $A$ be given by 
 $\tr(A):=\sum \langle Ae_i,e_i\rangle$, where $\{e_1,e_2,\ldots, e_n\}$ is  any fixed orthonormal basis of $\V$.  We make $\mathrm{L(\V)}$ into a real inner product space via 
$\langle A,B \rangle: =\tr(AB^T)$. Also, for $a,b\in \V$, we define $a\otimes b$ by $(a\otimes b)(x):=\langle b,x\rangle a$. Then, we can describe commutativity in $\V$ as follows: 
\begin{center}
  {\it   $a$ commutes with $b$ relative to ${\cal G}$ if and only if $b\otimes a\perp \Lie({\cal G}).$}
\end{center}

In view of the above theorem,  the condition $b\otimes a\perp \Lie({\cal G})$ reduces to the `orbital relation' of Seeger \cite{seeger-orbital} and, in the case of a Euclidean Jordan algebra, reduces to operator commutativity, see \cite{jeong-sossa-arxiv2024}.

\smallgap

We now list/describe some examples. More will be given in later sections.

\smallgap

\begin{example}
Let $\V=\R^n$ (carrying the usual inner product) with ${\cal G}$ denoting the set of all $n\times n$ permutation matrices. As ${\cal G}$ is finite, $\Lie({\cal G})=\{0\}$; so, relative to ${\cal G}$,  any two elements in $\R^n$  commute.
\end{example}

\begin{example}
Let $\V=\R^n$ (carrying the usual inner product) and ${\cal G}=\mathrm{GL}(\R^n)$. Then 
$\Lie({\cal G})=\R^{n\times n}$ and so if $a$  commutes with $b$ relative to ${\cal G}$, then $a=0$ or $b=0$.
\end{example}

\begin{example}\label{example: cone automorphism group}
In a real finite-dimensional inner product space $\V$, consider a proper cone $K$ (so $K$ is a closed convex cone which is pointed and has nonempty interior) with dual  $K^*:=\{y\in \V: \langle y,x\rangle\geq 0\,\,\mbox{for all}\,\,x\in K\}.$
Let ${\cal G}$ be the automorphism group of $K$, i.e.,
$${\cal G}=\mathrm{Aut(K)}:=\{L\in \mathrm{ L(\V)}: L(K)=K\}.$$
For any $L\in \mathrm{ L(\V)}$, Schneider and Vidyasagar \cite{schneider-vidyasagar} have shown that 
$$\exp(tL)(K)\subseteq K\,\,\mbox{for all}\,\,t\geq 0\,\,\mbox{if and only if}\,\,[x\in K,y\in K^*,\langle x,y\rangle=0]\Rightarrow \langle L(x),y\rangle \geq 0.$$
 From this, it follows that the Lie algebra of $\mathrm{Aut(K)}$ is the space of all linear transformations $L$ satisfying the condition
\begin{equation}\label{lyapunov-like}
[x\in K,y\in K^*,\langle x,y\rangle=0]\Rightarrow \langle L(x),y\rangle =0.
\end{equation}
In the optimization/complementarity literature,  a linear transformation $L$ satisfying the condition (\ref{lyapunov-like}) is said to be {\it Lyapunov-like}.
So, in the present setting, given $a,b\in \V$,
\begin{center}
     {\it $a$ commutes with $b$ relative to $\mathrm{Aut(K)}$ if and only if \\$\langle L(a),b\rangle=0$ for all Lyapunov-like transformations $L$ on $K$.}
\end{center}
For example, if $(a,b)$ is a {\it complementary pair}, that is, if $a\in K$, $b\in K^*$ and $\langle a,b\rangle=0$, then $a$  commutes with $b$ relative to $\mathrm{Aut(K)}$. 
As $L\in \mathrm{Aut(K)}\Leftrightarrow L^T\in \mathrm{Aut(K^*)}$, where $L^T$ denotes the transpose of $L$, we see that 
$a$  commutes with $b$ relative to $\mathrm{Aut(K)}$ if and only if $b$ commutes with $a$ relative to $\mathrm{Aut(K^*)}$. 
\\
As a further illustration, consider $\V=\R^n$ with the usual inner product and let $K=\R^n_+$. It is known (and can easily be verified) that Lyapunov-like transformations on $\R^n_+$ are just diagonal matrices. So, in this setting, $a$ commutes with $b$ if and only if 
$\langle Da,b\rangle=0$ for every diagonal matrix $D$. This simplifies to:
{\it $a$ commutes with $b$ relative to $\mathrm{Aut(\R^n_+)}$ if and only if $a*b=0$}, where $a*b$ is the componentwise product of $a$ and $b$.
\end{example}

\smallgap

\begin{example}
Consider a Euclidean Jordan algebra $\V$ of rank $n$ and unit element $e$. We assume that $\V$ carries the trace inner product. (For various definitions, see Section 3.2 below.) Let $K$ be the corresponding symmetric cone and ${\cal G}:=\mathrm{Aut(K)}$ (as in the previous example). In this setting, it is known (\cite{tao-gowda-representation}, Theorem 1) that Lyapunov-like transformations are of the form 
$$L=L_c+D,$$
where $c\in \V$, $L_c(x)=x\circ c$, and $D$ is a derivation, so
$D(x\circ y)=(Dx)\circ y+x\circ Dy$ for all $x,y\in \V$.
Then $a$ commutes with $b$ relative to $\mathrm{Aut(K)}$ if and only if $\langle c\circ a,b\rangle=0$ and $\langle Da,b\rangle=0$ for all $c\in \V$ and derivations $D$. Now $0=\langle c\circ a,b\rangle=\langle c,a\circ b\rangle$ for all $c$ implies that $a\circ b=0$. As we shall see in the proof of Theorem \ref{ commutativity equals operator commutativity}, the condition that $\langle Da,b\rangle=0$ for all derivations $D$ is equivalent to the operator commutativity of $a$ and $b$. In conclusion,  
\begin{center}
    {\it $a$ commutes with $b$ relative to $\mathrm{Aut(K)}$ if and only if \\
$a$ and $b$ operator commute and $a\circ b=0.$}
\end{center}
\end{example}

\begin{example}
    Consider the space $\Sn$ of all $n\times n$ real symmetric matrices with inner product 
    $\langle X,Y\rangle: =\tr(XY)$ (the trace of $XY$) and Jordan product 
    $X\circ Y:=\frac{XY+YX}{2}$.  Let $K$ be the completely positive cone in $\Sn$, i.e., 
    $$K=\Big \{\sum uu^T: u\in \mathbb R^n,\,u\geq 0\Big \},$$
    where $\sum uu^T$ denotes a finite sum of matrices of the form $uu^T$. Let ${\cal G}$ be the automorphism group of $K$ (as defined in Example \ref{example: cone automorphism group}). It is known, see \cite{gowda et al - lie algebra}, Example 2, that every element of $\mathrm{Lie(Aut(K))}$ is a transformation of the form $L_A$, where $L_A(X)=AX+XA^T$ and $A$ is an $n\times n$ diagonal matrix. Hence, in this setting, $X$ commutes with  $Y$  relative to $\mathrm{Aut(K)}$ if and only if $\langle L_A(X),Y\rangle=0$ for all $n\times n$ diagonal matrices $A$. 
    Now, the latter condition  simplifies to: $0 =\tr(AXY+XAY)=2\,\tr(A(X\circ Y))$ for all diagonal matrices $A$ and, subsequently, to $ \diag(X\circ Y)=0$. We conclude that  $X$ commutes with $Y$ relative to $\Aut(K)$ if and only if $ \diag(X\circ Y)=0$. 
\end{example}

\smallgap

Going beyond $\Sn$, we now consider the space $M_n$ of all $n\times n$ real matrices with inner product $\langle X,Y\rangle  =\tr(XY^T)$ and corresponding norm $||\cdot||.$  In $M_n$, let ${\cal O}(n)$ denote the orthogonal group of $\R^n$, so,
$${\cal O}(n):=\{A\in M_n: AA^T=I\}.$$
Let ${\cal G}_1$ and ${\cal G}_2$ denote closed subgroups of ${\cal O}(n)$. Within $\mathrm{L}(M_n)$, consider the matrix Lie group $\cal G$ of all linear transformations of the form 
$$T:X\mapsto UXV\quad (X\in M_n),$$
where $U\in {\cal G}_1$ and $V\in {\cal G}_2$. In the next result, we describe the Lie algebra of ${\cal G}$ and commutativity relative to ${\cal G}$.

\begin{theorem}\label{th:square}
Let ${\cal G}_1$, ${\cal G}_2$, and ${\cal G}$ be as above. 
Then,
\begin{itemize}
    \item  [$(i)$]  $\Lie({\cal G})$ consists of transformations of the form
$X\mapsto AX+XB$,
where $A\in \Lie({\cal G}_1)$ and $B\in \Lie({\cal G}_2).$ 
 \item [$(ii)$] Given $X,Y\in M_n$, $X$ commutes with $Y$ relative to ${\cal G}$ if and only if $YX^T\perp 
 \Lie({\cal G}_1)$ and $X^TY\perp \Lie({\cal G}_2)$.
 \end{itemize}
\end{theorem}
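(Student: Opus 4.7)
The plan is to prove (i) and then derive (ii) by a direct trace identity.

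For the inclusion ``$\supseteq$'' of (i), I would argue by exponentials: given $A\in\Lie({\cal G}_1)$ and $B\in\Lie({\cal G}_2)$, set $L_A(X)=AX$ and $R_B(X)=XB$; these commute in $\mathrm{L}(M_n)$, so $\exp(t(L_A+R_B))=\exp(tL_A)\exp(tR_B)$, and a power-series check gives $\exp(tL_A)(X)=\exp(tA)X$ and $\exp(tR_B)(X)=X\exp(tB)$. Therefore $\exp(tD)(X)=\exp(tA)X\exp(tB)\in{\cal G}$ for all $t\in\R$, where $D=L_A+R_B$, so $D\in\Lie({\cal G})$. For the reverse inclusion ``$\subseteq$'', I would use a dimension count. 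Define the linear map $\phi:\Lie({\cal G}_1)\oplus\Lie({\cal G}_2)\to\Lie({\cal G})$ by $\phi(A,B)=(X\mapsto AX+XB)$; the previous paragraph shows $\mathrm{image}(\phi)\subseteq\Lie({\cal G})$. This $\phi$ is injective: if $AX+XB=0$ for all $X\in M_n$, then $X=I$ forces $B=-A$, whereupon $AX=XA$ for all $X$ forces $A$ to be scalar, and since $A\in\Lie({\cal G}_1)\subseteq\Lie({\cal O}(n))$ consists of skew-symmetric matrices, $A=0=B$. The natural smooth surjection $\Phi:{\cal G}_1\times{\cal G}_2\to{\cal G}$, $\Phi(U,V)=(X\mapsto UXV)$, combined with the rank theorem/Sard's theorem, yields $\dim{\cal G}\leq\dim{\cal G}_1+\dim{\cal G}_2$. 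Together, $\dim{\cal G}_1+\dim{\cal G}_2=\dim\mathrm{image}(\phi)\leq\dim\Lie({\cal G})=\dim{\cal G}\leq\dim{\cal G}_1+\dim{\cal G}_2$, so equality holds throughout and $\mathrm{image}(\phi)=\Lie({\cal G})$.

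Part (ii) then follows quickly: by (i), $X$ commutes with $Y$ relative to ${\cal G}$ iff $\langle AX+XB,Y\rangle=0$ for every $A\in\Lie({\cal G}_1)$ and $B\in\Lie({\cal G}_2)$. Using cyclicity of the trace,
\begin{equation*}
\langle AX,Y\rangle=\tr(AXY^{T})=\langle A,YX^{T}\rangle\quad\text{and}\quad\langle XB,Y\rangle=\tr(BY^{T}X)=\langle B,X^{T}Y\rangle,
\end{equation*}
so the condition reads $\langle A,YX^{T}\rangle+\langle B,X^{T}Y\rangle=0$ for all admissible $A,B$. As $\Lie({\cal G}_1)$ and $\Lie({\cal G}_2)$ are linear subspaces and $A,B$ vary independently, this is equivalent to $YX^{T}\perp\Lie({\cal G}_1)$ and $X^{T}Y\perp\Lie({\cal G}_2)$.

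The principal obstacle is the reverse inclusion in (i), specifically the dimension bound $\dim{\cal G}\leq\dim{\cal G}_1+\dim{\cal G}_2$; the rank theorem/Sard (or, equivalently, viewing $\Phi$ as a Lie group covering onto ${\cal G}$ with finite kernel, since a quick computation shows $\ker\Phi\subseteq\{(\pm I,\pm I)\}$) delivers this cleanly but invokes external machinery. An alternative, hands-on route via Theorem~\ref{tangent cone version of lie algebra}(c)---extracting sequences $T_{U_k,V_k}\in{\cal G}$ with $T_{U_k,V_k}\to I$ and $(T_{U_k,V_k}-I)/t_k\to D$, using compactness of ${\cal G}_i\subseteq{\cal O}(n)$ and the $\pm I$-analysis to arrange $U_k,V_k\to I$, writing $U_k=\exp(A_k),V_k=\exp(B_k)$ via Cartan's theorem, and passing to the limit $AX+XB=D(X)$---is possible but requires careful control over $\|A_k\|/t_k$ and $\|B_k\|/t_k$, which makes the dimension approach more efficient.
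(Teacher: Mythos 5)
Your proposal is correct, and while part (ii) mirrors the paper's argument (the same trace identities $\langle AX,Y\rangle=\langle A,YX^{T}\rangle$ and $\langle XB,Y\rangle=\langle B,X^{T}Y\rangle$, then decoupling $A$ and $B$), your treatment of part (i) takes a genuinely different route for the hard inclusion. For $\supseteq$ you exponentiate $L_A+R_B$ directly, which is slightly more streamlined than the paper's appeal to Theorem~\ref{tangent cone version of lie algebra} via curves, but both are quick. The substantive divergence is in $\subseteq$: the paper proceeds constructively, starting from $\exp(tD)X=U(t)XV(t)$, first forcing a continuous selection of $(U(t),V(t))$ near $(I,I)$ by a compactness-and-disjoint-balls argument, and then proving differentiability column-by-column via the trick $u_1(t)u_1(t)^{T}=U(t)e_1e_1^{T}U(t)^{T}$ and $\alpha(t)=\sqrt{a_{11}(t)}$, finally differentiating at $t=0$. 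Your route sidesteps that entire analytic construction: you show $\phi(A,B)=(X\mapsto AX+XB)$ is injective on $\Lie({\cal G}_1)\oplus\Lie({\cal G}_2)$ (using skew-symmetry of $A$ to kill the scalar case), observe that $\Phi:{\cal G}_1\times{\cal G}_2\to{\cal G}$ is a smooth surjection between compact Lie groups with finite kernel contained in $\{(\pm I,\pm I)\}$, and invoke the dimension bound $\dim{\cal G}\leq\dim{\cal G}_1+\dim{\cal G}_2$ to squeeze $\mathrm{image}(\phi)=\Lie({\cal G})$. This is shorter and cleaner, at the cost of importing nontrivial Lie-theoretic machinery (Sard/rank theorem or covering-map dimension theory, plus the fact that $\dim\Lie({\cal G})=\dim{\cal G}$), whereas the paper's proof is self-contained and elementary, just longer. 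One small technical flag on your covering-map phrasing: $\Phi(U,V)(X)=UXV$ satisfies $\Phi(U_1,V_1)\circ\Phi(U_2,V_2)=\Phi(U_1U_2,V_2V_1)$, so it is a Lie group homomorphism only after passing to the opposite group structure on ${\cal G}_2$ (or replacing $V$ by $V^{T}$); this is harmless for the dimension count but should be stated if you lean on the covering-map version rather than Sard.
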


\begin{proof}
 In what follows, we reserve the letters $U,U_1,U_2, U(t)$, etc., for matrices in ${\cal G}_1$ and $V,V_1,V_2,V(t),$ etc., for  matrices in ${\cal G}_2$. Since a scalar multiple of the identity matrix $I$ in $M_n$ is the only matrix that commutes with all other matrices in $M_n$, and that ${\cal G}_1$ and ${\cal G}_2$ are subsets of ${\cal O}(n)$, we see that the equality $UXV=X$ holds for all $X\in M_n$ if and only if $U=V=I$ or $U=V=-I$. Consequently,  the equality $U_1XV_1=U_2XV_2$ holds for all $X\in M_{n}$ if and only if either $U_1=U_2$ and $V_1=V_2$, or $U_1=-U_2$ and $V_1=-V_2$.     \\
 
$(i)$ Let $D\in \mathrm{Lie({\cal G})}$ so that for every  $t\in \R$, there exist matrices $U(t)\in {\cal G}_1$ and $V(t)\in {\cal G}_2$ such that 
$$\exp(tD)X=U(t)XV(t)\quad (X\in M_n).$$
 In particular, for $t=0$, we have $X=U(0)XV(0)$ for all $X$. By our observation above, either $U(0)=V(0)=I$ or $U(0)=V(0)=-I$. Let $\varepsilon>0$ so that relative to the operator norm on $M_n$, the open balls $B(I,\varepsilon)$ and  $B(-I,\varepsilon)$) around $I$ and $-I$  are disjoint. We now claim the existence of $\delta>0$ such that 
     either for all $t\in (-\delta,\delta)$,  $U(t)\in B(I,\varepsilon)$ or for all $t\in (-\delta,\delta)$,  $U(t)\in B(-I,\varepsilon)$. 
     This can be seen as follows: Suppose there is an $\varepsilon>0$ and a sequence $(t_k)$ of positive numbers converging to zero such that for all $k$, $||U(t_k)-I||\geq \varepsilon$ and $||U(t_k)+I||\geq \varepsilon$. Then, there exists a subsequence of $(t_k)$ so that $U(t_{k_j})\rightarrow U\in {\cal G}_1$ and $V(t_{k_j})\rightarrow V\in {\cal G}_2$
as $j\rightarrow \infty$. It follows that $||U-I||\geq \varepsilon$ and $||U+I||\geq \varepsilon$. However, letting $t_{k_j}\rightarrow 0$ in  $\exp(t_{k_j}D)X=U(t_{k_j})XV(t_{k_j})$, we get  $I=UXV$ for all $X$, and, consequently, $U=V=I \in B(I,\varepsilon)$ or $U=V=-I\in B(-I,\varepsilon)$. We reach a contradiction.\\
By a similar argument, we can assume the existence of a $\delta>0$ (without loss of generality, the same $\delta$ as before) such that either for all $t\in (-\delta,\delta)$,  $V(t)\in B(I,\varepsilon)$ or for all $t\in (-\delta,\delta)$,  $V(t)\in B(-I,\varepsilon)$.  Using the facts that $U(0)=V(0)$ and $U(t)XV(t)=(-U(t))X(-V(t))$, we may assume, without loss of generality, 
\begin{center}
     $U(t),V(t)\in B(I,\varepsilon)$ for all $t\in (-\delta,\delta)$.
    \end{center}
 Because of our choice of $\epsilon$, for each $t\in (-\delta, \delta)$, $U(t)$ and $V(t)$ are uniquely defined in    $B(I,\varepsilon)$. Additionally, 
 a modification of the above argument shows the continuity of  $U(t)$ and $V(t)$ at $0$.
\\
We now show that $U(t)$ and $V(t)$ (chosen above) are differentiable by showing that their columns are differentiable. We present an argument for the first column of $U(t)$ (with a similar argument for other columns).\\
Now, for any fixed $X$,  $\exp(tD)X$ is differentiable in $t$; hence its transpose $\Big (\exp(tD)X)\Big)^T$
is differentiable. It follows that 
$$U(t)XX^TU(t)^T=\Big (\exp(tD)X\Big)\Big (\exp(tD)X)\Big)^T $$ is also differentiable. 
 Now, let  $X=e_1e_1^T$, where $e_1$ is the column vector in $\R^n$ with $1$ in the first slot and zeros elsewhere. Let $u_1(t):=U(t)e_1$ denote the first column of $U(t)$. Then, each entry in the (matrix) function 
$$u_1(t)u_1(t)^T=U(t)XX^TU(t)^T,$$
namely, $a_{ij}(t):=(u_1(t))_i(u_1(t))_j$
is differentiable. As $U(0)=I$, we see that 
$a_{11}(t)=[(u_1(t))_1]^2$ is differentiable in $t$ (in particular, near zero) with $a_{11}(0)=1$.  It follows from the continuity of $U(t)$ at $0$ that for all $t$ near zero, $\alpha(t):=(u_1(t))_1=\sqrt{a_{11}(t)}$ is well-defined, positive, and differentiable. Hence, for any $j$, $(u_1(t))_j=\frac{1}{\alpha(t)}a_{1j}(t)$  is also differentiable near zero.\\
In summary, we have shown that $U(t)$ is differentiable near $t=0$. A similar argument works for $V(t)$ as well. 
\\
Now, for any $X\in M_n$, differentiating   $\exp(tD)X=U(t)XV(t)$ at $t=0$, we obtain
$$D(X)=U^\prime(0)X+XV^\prime(0).$$ Since $U(0)=I=V(0)$, we see that $A:=U^\prime(0)\in \Lie({\cal G}_1)$ and $B:=V^\prime(0)\in \Lie({\cal G}_2)$. Hence, $D(X)=AX+XB$ for all $X$ proving the first part of $(i)$. \\
To see the converse, consider the transformation 
$T:M_n\rightarrow M_n$ defined by $T(X)=AX+XB$, where $A\in \Lie({\cal G}_1)$ and $B\in \Lie({\cal G}_2)$. In view of Theorem \ref{tangent cone version of lie algebra}, we find two differentiable curves $p(t)$ and $q(t)$ over some interval $(-\delta, \delta)$ around zero into ${\cal O}(n)$ such that $p(0)=I=q(0)$ with $p^\prime(0)=A$ and $q^\prime(0)=B$. Then $h(t): (-\delta,\delta)\rightarrow L(\V)$, defined by  $h(t)X:=p(t)Xq(t)$ for all $X\in M_n$, is a differentiable curve with $h(0)=I$ and $h^\prime(0)=T.$ This proves that 
$T\in \Lie({\cal G})$. \\
$(ii)$ Suppose $X$ and $Y$ commute relative to ${\cal G}$. Then, by our definition, $\langle T(X),Y\rangle=0$ for all $T\in \Lie({\cal G})$. Let $A\in \Lie({\cal G}_1)$ and $B\in \Lie({\cal G}_2)$ be arbitrary  so that the transformation $T$ defined by $X\mapsto AX+XB$ is in $\Lie({\cal G})$. Then, $\langle AX+XB,Y\rangle=0$ for all $A$ and $B$  specified above. By taking $B=0$, we see that $\langle A,YX^T\rangle =\tr(AXY^T)=\langle AX,Y\rangle=0$ for all  $A\in \Lie({\cal G}_1)$. This shows that $YX^T\perp \Lie({\cal G}_1).$  Likewise, by putting $A=0$ and working with $B$, we see that $X^TY\perp \Lie ({\cal G}_2)$.\\
Conversely, if $YX^T\perp \Lie({\cal G}_1)$ and  $X^TY\perp \Lie ({\cal G}_2)$, then $\langle AX+XB,Y\rangle=0$ for all $A\in \Lie({\cal G}_1)$ and $B\in \Lie({\cal G}_2)$. This shows, by $(i)$, that $\langle T(X),Y\rangle=0$ for all $T\in \Lie({\cal G})$. This completes the proof.
\end{proof}

Now suppose ${\cal G}_1={\cal O}(n)={\cal G}_2$. As the Lie algebra of ${\cal O}(n)$ is the space of all $n\times n$ (real) skew-symmetric matrices and the orthogonal complement of this space in $M_n$ is the space of all $n\times n$ real symmetric matrices, we have the following result.

\begin{corollary}\label{commutativity in Mn}
 \label{commutativity in Mn}
On $M_n$, consider the group ${\cal G}$ of linear transformations of the form 
$X\mapsto UXV$,
where $U$ and $V$ are orthogonal. Then,
\begin{itemize}
    \item  [$(i)$]  $\Lie({\cal G})$ consists of transformations of the form
$X\mapsto AX+XB$,
where $A$ and $B$ are skew-symmetric. 
\item [$(ii)$] $X,Y\in M_{n}$ commute relative to ${\cal G}$ if and only if $XY^T$ and $X^TY$ are symmetric.
\end{itemize}

    \end{corollary}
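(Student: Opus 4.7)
The plan is to obtain Corollary \ref{commutativity in Mn} as a direct specialization of Theorem \ref{th:square}, with the only nontrivial ingredient being the identification of $\Lie({\cal O}(n))$ and of its orthogonal complement in $M_n$.

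First, I would recall (or verify in one line from $AA^T = I$ and differentiation at $t=0$ of $\exp(tA)\exp(tA)^T = I$) that $\Lie({\cal O}(n))$ is exactly the space $\mathrm{Skew}_n$ of $n\times n$ real skew-symmetric matrices. Plugging this into Theorem \ref{th:square}(i) with ${\cal G}_1 = {\cal G}_2 = {\cal O}(n)$ immediately gives part $(i)$: every element of $\Lie({\cal G})$ has the form $X \mapsto AX + XB$ with $A, B$ skew-symmetric, and every such transformation lies in $\Lie({\cal G})$.

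For part $(ii)$, I would invoke Theorem \ref{th:square}(ii): $X$ commutes with $Y$ relative to ${\cal G}$ if and only if $YX^T \perp \mathrm{Skew}_n$ and $X^TY \perp \mathrm{Skew}_n$. The next step is the standard observation that, under the trace inner product $\langle M, N\rangle = \tr(MN^T)$ on $M_n$, the orthogonal complement of $\mathrm{Skew}_n$ is the space $\mathrm{Sym}_n$ of symmetric matrices (since any $M \in M_n$ splits uniquely as the sum of its symmetric and skew-symmetric parts, and these parts are mutually orthogonal). Hence $YX^T \perp \mathrm{Skew}_n$ is equivalent to $YX^T$ being symmetric, and likewise for $X^TY$.

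Finally, to match the statement of the corollary exactly, I would note that $(YX^T)^T = XY^T$, so $YX^T$ is symmetric if and only if $XY^T$ is symmetric. This rewrites the condition as ``$XY^T$ and $X^TY$ are symmetric,'' completing the proof. There is no substantive obstacle here; the only mild subtlety worth flagging is this transpose cosmetic step and the explicit identification of $\Lie({\cal O}(n))$ and $\mathrm{Skew}_n^{\perp}$.
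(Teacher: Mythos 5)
Your proposal is correct and follows exactly the route the paper takes: specialize Theorem \ref{th:square} with ${\cal G}_1={\cal G}_2={\cal O}(n)$, identify $\Lie({\cal O}(n))$ with the skew-symmetric matrices and its trace-orthogonal complement in $M_n$ with the symmetric matrices, and note that $YX^T$ is symmetric if and only if its transpose $XY^T$ is. The paper leaves these steps as a one-line remark before the corollary, so there is nothing missing and no difference in approach.
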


\noindent{\bf Remarks.} For the sake of simplicity of presentation, Theorem\,\ref{th:square} deals with transformations on the space of square matrices $M_n$. But, with minor modifications, this theorem can be also adapted for transformations on the space of $m\times n$ rectangular matrices $M_{m,n}$. For example, analogous to Corollary \ref{commutativity in Mn}, for the group $\mathcal G$ of linear transformations on $M_{m,n}$ of the form $X\mapsto UXV$ with $(U,V)\in\mathcal O(m)\times\mathcal O(n)$, $\Lie({\cal G})$ consists of transformations of the form
$X\mapsto AX+XB$,
where $A$ and $B$ are skew-symmetric on $M_m$ and $M_n$, respectively. A similar result was presented by Lewis and Sendov \cite{lewis-sendov}.

\section{Semi-FTvN systems}
In this section, we formally introduce semi-FTvN systems and describe some examples and results. 
In what follows, for simplicity, we use the same inner product and norm notation. 

\begin{definition}\label{defn of semi-ftvn system}
    We say that a triple $(\V,\W,\lambda)$ is a semi-FTvN system if $\V$ and $\W$ are real inner product spaces with $\lambda:\V\rightarrow \W$ satisfying the following conditions:
    \begin{itemize}
        \item [(A1)] $\lambda$  is norm-preserving, that is, $||\lambda(x)||=||x||$ for all $x\in \V$;
        \item [(A2)] $\lambda$ is inner product expanding, that is, $\langle x,y\rangle\leq \big\langle \lambda(x),\lambda(y)\big\rangle$ for all $x,y\in \V$.
    \end{itemize}
\end{definition}

 We recall that (\ref{ftvn defn}) defines a FTvN system.
In view of (\ref{ftvn defn}) implying (\ref{semi-ftvn}), every FTvN system is a semi-FTvN system. The following statements are easy to verify:
\begin{itemize}
\item $(\V,\W,\lambda)$ is a semi-FTvN system if and only if the following `sharpened' Cauchy-Schwarz inequality holds for all $x,y\in \V$: $\langle x,y\rangle\leq \langle \lambda(x),\lambda(y)\rangle \leq ||x||\,||y||.$
\item  
 If $(\V,\W,\lambda)$ is a semi-FTvN system, then so is $(\V,\W,\widetilde{\lambda}).$
where $\widetilde{\lambda}(x):=-\lambda(-x)$.
\item 
$(\V,\W,\lambda)$ is a semi-FTvN system and $\U$ is a (linear) subspace of $\V$, then $(\U,\W,\lambda)$, with $\lambda$ restricted to $\U$, is also a semi-FTvN system. (A similar statement for FTvN systems is false, see Example \ref{subspace of a FTvN} below.)
\item  If $(\V,\W,\lambda)$ and $(\W,\Z,\mu)$ are  semi-FTvN systems, then so is their `composition' $(\V,\Z,\mu\circ \lambda)$. (A similar statement for FTvN systems is false, see Example \ref{composition example} below.)
\item  If $(\V_1,\W_1,\lambda_1)$ and $(\V_2,\W_2,\lambda_2)$ are  semi-FTvN systems, then so is their `product' $(\V_1\times\V_2, ,\W_1\times \W_2,\lambda_1\times \lambda_2)$. 
\item  If $(\V,\W,\lambda)$  is a semi-FTvN system, then so is $(\overline{\V},\overline{\W},\overline{\lambda})$, where $\overline{\V}$ and $\overline{\W}$ are the Hilbert space completions of $\V$ and $\W$ respectively, and 
$\overline{\lambda}$ is the unique extension of $\lambda.$ (We note that $\lambda$ is Lipschitzian on $\V$, hence uniformly continous.) It is not known if such a statement holds for FTvN systems, see Problem 2 on page 12 of \cite{gowda-jeong-ftvn}.
\end{itemize}

The following properties are known in the setting of FTvN systems \cite{gowda-ftvn}. They continue to hold in semi-FTvN systems.
\begin{proposition} \label{elementary properties of lambda} Suppose $(\V,\W,\lambda)$ is a semi-FTvN system.  Then the following hold for all $x,y\in \V$ and $\alpha\in \R$:
\begin{itemize}
    \item [$(a)$] $\lambda(\alpha\,x)=\alpha\,\lambda(x)$ for all $\alpha\geq 0$.
\item [$(b)$] $||\lambda(x)-\lambda(y)||\leq ||x-y||.$
\item [(c)] $||\lambda(x+y)||\leq ||\lambda(x)+\lambda(y)||.$
\end{itemize}
\end{proposition}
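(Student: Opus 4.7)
All three items should follow from the two axioms (A1) and (A2) combined with routine Cauchy--Schwarz manipulations in the codomain $\W$. My plan is to dispatch (b) and (c) as direct expansions of squared norms, and to treat (a) via the equality case of Cauchy--Schwarz; the only item requiring any care is the positive homogeneity in (a).

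For (b), I would expand
\[
\|\lambda(x)-\lambda(y)\|^{2} = \|\lambda(x)\|^{2} - 2\,\langle \lambda(x),\lambda(y)\rangle + \|\lambda(y)\|^{2},
\]
rewrite the outer norms via (A1), and then use (A2) in the form $-2\,\langle \lambda(x),\lambda(y)\rangle \leq -2\,\langle x,y\rangle$ to collapse the right-hand side to $\|x-y\|^{2}$. For (c) the same template applies with opposite signs: expand $\|\lambda(x)+\lambda(y)\|^{2}$, replace the outer norms using (A1), and this time apply (A2) from above on the cross term to dominate $\|\lambda(x+y)\|^{2} = \|x+y\|^{2}$.

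For (a), the case $\alpha = 0$ is immediate: (A1) gives $\|\lambda(0)\| = 0$, hence $\lambda(0) = 0$. For $\alpha > 0$ and $x \neq 0$, my plan is to sandwich the quantity $\langle \lambda(\alpha x),\lambda(x)\rangle$ between two equal values. On one side, applying (A2) to the pair $(\alpha x, x)$ yields $\alpha\|x\|^{2} \leq \langle \lambda(\alpha x),\lambda(x)\rangle$. On the other side, Cauchy--Schwarz together with (A1) gives $\langle \lambda(\alpha x),\lambda(x)\rangle \leq \|\lambda(\alpha x)\|\,\|\lambda(x)\| = \alpha\|x\|^{2}$. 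Equality throughout forces equality in Cauchy--Schwarz, so $\lambda(\alpha x)$ must be a non-negative scalar multiple of $\lambda(x)$ (note $\lambda(x) \neq 0$ since (A1) and $x \neq 0$ preclude it); comparing norms via (A1) one more time pins down the scalar as $\alpha$. The case $x = 0$ reduces to $\lambda(0) = 0$ already established.

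I do not foresee any serious obstacle in executing this plan: each item becomes a short calculation once (A1)--(A2) are paired with Cauchy--Schwarz. The only mildly delicate step is deducing positive homogeneity in (a) from the equality case of Cauchy--Schwarz, but this is standard once the two matching bounds on $\langle \lambda(\alpha x),\lambda(x)\rangle$ have been lined up, and no further structural hypothesis on $\V$, $\W$, or $\lambda$ beyond (A1)--(A2) should be required.
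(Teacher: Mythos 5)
Your proposal is correct, and items (a) and (c) take genuinely different (though equally valid) routes from the paper.

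For (b) you do exactly what the paper does: expand the squared norm, use (A1) on the outer terms and (A2) on the cross term.

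For (c) your argument is actually tidier than the paper's. You expand $\|\lambda(x)+\lambda(y)\|^2$, apply (A1) and (A2), and directly dominate $\|x+y\|^2=\|\lambda(x+y)\|^2$; no Cauchy--Schwarz is needed. The paper instead splits $\|\lambda(x+y)\|^2=\langle x+y,x\rangle+\langle x+y,y\rangle$, applies (A2) twice to reach $\langle\lambda(x+y),\lambda(x)+\lambda(y)\rangle$, and finishes with Cauchy--Schwarz. Your version is the same algebraic identity that the paper uses later (in the proof of the commutativity-equivalence proposition) and is a clean alternative.

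For (a) your route is more circuitous. You handle $\alpha=0$ and $x=0$ separately, then for $\alpha>0,\ x\neq0$ you sandwich $\langle\lambda(\alpha x),\lambda(x)\rangle$, invoke the equality case of Cauchy--Schwarz to conclude $\lambda(\alpha x)$ is a nonnegative multiple of $\lambda(x)$, and match norms to pin down the scalar. This is correct. The paper avoids all case analysis and the equality discussion with a single line: expand
\[
\|\lambda(\alpha x)-\alpha\lambda(x)\|^2
=\|\lambda(\alpha x)\|^2+\alpha^2\|\lambda(x)\|^2-2\alpha\langle\lambda(\alpha x),\lambda(x)\rangle
\le \|\alpha x\|^2+\alpha^2\|x\|^2-2\alpha\langle\alpha x,x\rangle=0,
\]
using (A1) on the first two terms and (A2) on the cross term (valid since $\alpha\geq 0$). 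This forces $\lambda(\alpha x)=\alpha\lambda(x)$ at once. You may wish to note that the same ``show the squared norm of the difference is $\leq 0$'' trick you implicitly used in (b) also disposes of (a) uniformly, without case-splitting or Cauchy--Schwarz equality analysis.
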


\noindent{\it Note:} Item $(b)$ shows that $\lambda$ is Lipschitz continuous.

\smallgap

\begin{proof}
$(a)$  Let $\alpha\geq 0$. Using $(A1)$ and $(A2)$ (of Definition \ref{defn of semi-ftvn system}), we have  
$$||\lambda (\alpha\,x)-\alpha\lambda(x)||^2=||\lambda (\alpha\,x)||^2+\alpha^2||\lambda(x)||^2-2\alpha\langle \lambda(\alpha x),\lambda(x)\rangle \leq ||\alpha\,x||^2+\alpha^2||x||^2-2\alpha\langle \alpha x,x\rangle=0.$$ This proves $(a)$.\\ 
$(b)$ Using $(A1)$ and $(A2)$, we have   
$||\lambda(x)-\lambda(y)||^2-||x-y||^2=
2[\langle x,y\rangle-\langle \lambda(x),\lambda(y)\rangle]\leq 0$. The stated Lipschitzian property follows.\\
$(c)$ From $(A1)$, $||\lambda(x+y)||^2=||x+y||^2=\langle x+y,x+y\rangle=\langle x+y,x\rangle+\langle x+y,y\rangle$.\\ From $(A2)$, 
$\langle x+y,x\rangle \leq \langle \langle \lambda(x+y),\lambda(x)\rangle$ and 
$\langle x+y,x\rangle \leq \langle \langle \lambda(x+y),\lambda(y)\rangle$; hence, 
\begin{equation}\label{CS}||\lambda(x+y)||^2\leq \langle \lambda(x+y),\lambda(x)+\lambda(y)\rangle\leq ||\lambda(x+y)||\,||\lambda(x)+\lambda(y)||,
\end{equation}
where the second inequality is due to the Cauchy-Schwarz inquality.
This gives $(c)$. 
\end{proof}

\subsection{Automorphisms and  commutativity}

\begin{definition}\label{automorphism defn}
    In the setting of a semi-FTvN system $(\V,\W,\lambda)$, 
     an invertible linear transformation $A:\V\rightarrow \V$ is said to be an automorphism if $\lambda(Ax)=\lambda(x)$ for all $x\in \V$. 
\end{definition}
Given a semi-FTvN system $(\V,\W,\lambda)$, we let $\mathrm{Aut(\V,\W,\lambda)}$ (or $\mathrm{Aut(\V)}$, for short) denote the set of all automorphisms. Since $\lambda$  is norm-preserving, each automorphism is a bounded linear transformation and  an orthogonal transformation (i.e., inner product preserving); so $\mathrm{Aut(\V)}\subseteq {\cal O}(\V),$
where ${\cal O}(\V)$ ($=$ the set of all invertible orthogonal linear transformations) denotes the orthogonal group of $\V$. As in (\ref{lie algebra defn}), we let
$$\mathrm{Lie(Aut(\V))}:=\{D\in \mathrm{ L(\V)}: \exp(tD)\in \mathrm{Aut(\V)}\,\,\mbox{for all}\,\,t\in \R\}.$$

\gap

  While many of our concepts and results below are based on the group $\mathrm{Aut}(\V)$, in some settings, it may be more advantageous to work with a closed subgroup ${\cal G}$ of  $\mathrm{Aut}(\V)$ (such as the connected component of identity in $\mathrm{Aut}(\V)$ when $\V$ is finite-dimensional or the one that appears in Example \ref{NDS} below).   In this case, 
$$\mathrm{Lie({\cal G})}:=\{D\in \mathrm{ L(\V)}: \exp(tD)\in {\cal G}\,\,\mbox{for all}\,\,t\in \R \}\,\subseteq\, \mathrm{Lie(Aut(\V))}. $$

\gap

We now define commutativity in a semi-FTvN system. Our motivation comes from Remark 3.7 in \cite{gowda-jeong-siopt} and a recent formulation of operator commutativity in Euclidean Jordan algebras (\cite{jeong-sossa-arxiv2024}, Proposition 2).

\begin{definition}\label{weak commutativity defn}
    Let  $(\V,\W,\lambda)$ be a semi-FTvN system and ${\cal G}$ be a closed subgroup of $\mathrm{Aut(\V)}$. Given $a,b\in \V$, 
    we say that
    \begin{itemize}
        \item [$(i)$] $a$  strongly commutes with $b$ if $\big\langle a,b\big\rangle=\big\langle \lambda(a),\lambda(b)\big\rangle.$
    \item [$(ii)$] $a$ commutes with $b$ if $\langle Da,b\rangle=0$ for all $D\in \mathrm{Lie(Aut(\V))}.$
    \item [$(iii)$]  $a$ commutes with $b$ relative to ${\cal G}$ if 
$\langle Da,b\rangle=0$ for all $D\in \mathrm{Lie({\cal G})}$.
    \end{itemize}
\end{definition}

\noindent{\bf Remarks.} $(\alpha)$ In our previous work  on FTvN systems \cite{gowda-ftvn, gowda-jeong-ftvn, jeong-gowda-ftvn}, we used the condition $\big\langle a,b\big\rangle=\big\langle \lambda(a),\lambda(b)\big\rangle$ to define commutativity. In order to use the terminologies compatible with those in Euclidean Jordan algebras, henceforth we use the new term `strong commutativity'. \\
$(\beta)$ {\it The above definitions are symmetric in $a$ and $b$:} The first definition is clearly symmetric in $a$ and $b$. From (\ref{skew-symmetry}), $\langle Da,b\rangle=0\Rightarrow \langle Db,a\rangle=0$ when $D\in \mathrm{Lie({\cal G}});$ so, the other two definitions are also symmetric.  Because of this symmetry, we use phrases such as `$a$ and $b$ (strongly) commute' etc., while dealing with semi-FTvN systems. \\
We note that if $a$ and $b$  commute (relative to ${\cal G}$), then $\pm a$ and $\pm b$ commute (relative to ${\cal G})$. Also, {\it commutativity implies commutativity relative to any ${\cal G}$} as $\mathrm{Lie({\cal G})}\subseteq \mathrm{Lie(Aut(\V))}.$\\
$(\gamma)$ While strong commutativity of $a$ and $b$ is easily verifiable, checking commutativity requires the knowledge of the automorphism group $\mathrm{Aut(\V)}$ (rather, its connected component of the Identity transformation) and its Lie algebra.  \\
$(\delta)$ The relation 
$\langle Da,b\rangle=0$ for all $D\in \mathrm{Lie(Aut({\cal G}))}$ is a type of `orbital relation' defined in \cite{seeger-orbital}. \\
$(\epsilon)$ Suppose $\V$ is finite dimensional so that ${\cal G}$ is a matrix Lie group. It is well known that the connected component ${\cal G}_0$ of ${\cal G}$ is also a matrix Lie group (cf. \cite{hall}, Prop. 1.8). Hence, if $a$ and $b$ commute relative to ${\cal G}$, then they commute relative to ${\cal G}_0$. 

\gap

In the theorem below, we show that strong commutativity implies commutativity. First, we describe commutativity in a semi-FTvN system; the following result and its proof are identical to those for FTvN systems (\cite{gowda-ftvn}, Section 2). 
\begin{proposition} \label{commutativity equivalence}
In a semi-FTvN system, the following are equivalent:  
\begin{itemize}
\item [$(a)$] $x$ and $y$ strongly commute, i.e.,  $\langle x,y\rangle=\langle \lambda(x),\lambda(y)\rangle.$
\item [$(b)$] $||\lambda(x)-\lambda(y)||=||x-y||.$
\item [$(c)$] $||\lambda(x+y)||= ||\lambda(x)+\lambda(y)||$.
\item [$(d)$] $\lambda(x+y)=\lambda(x)+\lambda(y).$
\end{itemize}
\end{proposition}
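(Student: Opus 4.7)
The plan is to prove the cycle $(a)\Leftrightarrow(b)$, $(a)\Leftrightarrow(c)$, and $(c)\Leftrightarrow(d)$, using the norm-preserving axiom (A1) throughout and borrowing the chain of inequalities (\ref{CS}) established in the proof of Proposition \ref{elementary properties of lambda}.

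For $(a)\Leftrightarrow(b)$, I would simply expand both squared norms. Using (A1), $\|\lambda(x)-\lambda(y)\|^2 = \|x\|^2+\|y\|^2 - 2\langle\lambda(x),\lambda(y)\rangle$ while $\|x-y\|^2 = \|x\|^2+\|y\|^2 - 2\langle x,y\rangle$, so the two norm identities coincide precisely when $\langle x,y\rangle = \langle \lambda(x),\lambda(y)\rangle$. For $(a)\Leftrightarrow(c)$, I would again expand: $\|\lambda(x+y)\|^2 = \|x+y\|^2 = \|x\|^2+\|y\|^2+2\langle x,y\rangle$ (using (A1) on the left) and $\|\lambda(x)+\lambda(y)\|^2 = \|x\|^2+\|y\|^2 + 2\langle\lambda(x),\lambda(y)\rangle$, so equality of norms is equivalent to $\langle x,y\rangle=\langle\lambda(x),\lambda(y)\rangle$.

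The implication $(d)\Rightarrow(c)$ is immediate by taking norms. The core step, and the one I expect to be the main obstacle, is $(c)\Rightarrow(d)$. For this I would revisit the chain (\ref{CS}) already derived in the proof of Proposition \ref{elementary properties of lambda}(c):
\[
\|\lambda(x+y)\|^2 \;\leq\; \langle\lambda(x+y),\lambda(x)+\lambda(y)\rangle \;\leq\; \|\lambda(x+y)\|\,\|\lambda(x)+\lambda(y)\|.
\]
Assuming (c), the outer terms are equal, forcing equality at both inequalities. Equality in the Cauchy-Schwarz step (the second inequality) forces $\lambda(x)+\lambda(y)$ and $\lambda(x+y)$ to be non-negatively proportional (or one of them zero), while equality in the first step pins down the proportionality constant to be $1$. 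Concretely, if $\lambda(x+y)\neq 0$, write $\lambda(x)+\lambda(y) = c\,\lambda(x+y)$ with $c\geq 0$; substituting into the first equality gives $c\,\|\lambda(x+y)\|^2 = \|\lambda(x+y)\|^2$, so $c=1$ and (d) holds. The degenerate case $\lambda(x+y)=0$ is handled separately: then (c) gives $\|\lambda(x)+\lambda(y)\|=0$, hence $\lambda(x)+\lambda(y) = 0 = \lambda(x+y)$.

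The delicate point is ensuring the Cauchy-Schwarz equality case is exploited correctly and that the degenerate possibility $\lambda(x+y)=0$ is disposed of cleanly; apart from this, the argument is a short sequence of inner-product manipulations that relies only on axioms (A1)--(A2) and the inequality (\ref{CS}) already in hand.
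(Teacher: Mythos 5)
Your proposal is correct and follows essentially the same route as the paper: the equivalences $(a)\Leftrightarrow(b)\Leftrightarrow(c)$ come from expanding squared norms using (A1), and $(c)\Rightarrow(d)$ comes from the equality case of the Cauchy--Schwarz chain (\ref{CS}). The only cosmetic difference is in the last step: the paper concludes equality of $\lambda(x+y)$ and $\lambda(x)+\lambda(y)$ from nonnegative proportionality plus equal norms, whereas you pin the proportionality constant to $1$ via the first inequality of (\ref{CS}) and handle the $\lambda(x+y)=0$ case separately; both are valid and amount to the same thing.
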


\gap

\begin{proof}
Using $(A1)$ (of Definition \ref{defn of semi-ftvn system}), we see that 
$$||\lambda(x)-\lambda(y)||^2-||x-y||^2=
2[\langle x,y\rangle-\langle \lambda(x),\lambda(y)\rangle]=||\lambda(x+y)||^2- ||\lambda(x)+\lambda(y)||^2.$$
The equivalence $(a)\Leftrightarrow (b)\Leftrightarrow (c)$ follows.\\
$(c)\Rightarrow (d)$: When $(c)$ holds, we have equality in the Cauchy-Schwarz inequality (\ref{CS}). Hence, 
one of the vectors, $\lambda(x+y)$ or $\lambda(x)+\lambda(y)$, is a nonnegative multiple of the other. Since their norms are equal (as we are assuming  $(c)$), these vectors are equal. Item $(d)$ follows. \\
Finally, $(d)\Rightarrow (c)$ is obvious.
\end{proof}

\gap

\noindent{\bf Remarks.} In a semi-FTvN system $(\V,\W,\lambda)$ we define the $\lambda$-orbit of an element $u\in \V$ by
$$[u]:=\{x\in \V: \lambda(x)=\lambda(u)\}.$$
One interesting consequence of Item $(b)$ in the above proposition is that
\begin{center}
    {\it if $x,y$ in $[u]$ strongly commute, then $x=y$. }
\end{center}

 We now state one of our main results.
 
\begin{theorem} \label{strong commutativity implies commutativity}
    In any semi-FTvN system $($in particular, in any FTvN system$)$, strong commutativity implies commutativity $($relative to any closed subgroup of $\mathrm{Aut(\V)})$.
\end{theorem}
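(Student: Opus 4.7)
The plan is to reduce strong commutativity to a first-order optimality condition along one-parameter subgroups of ${\cal G}$. Fix $x,y\in \V$ strongly commuting, and fix any $D\in \mathrm{Lie}({\cal G})$. Since ${\cal G}\subseteq\mathrm{Aut}(\V)$, the curve $t\mapsto e^{tD}$ stays inside $\mathrm{Aut}(\V)$, so $\lambda(e^{tD}x)=\lambda(x)$ for every $t\in\R$. This is the only place where the automorphism property is used, and it is what connects the algebraic hypothesis on ${\cal G}$ to the analytic hypothesis (A2).

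Next, I would introduce the real-valued function
\[
f(t):=\langle e^{tD}x,\,y\rangle\qquad(t\in\R).
\]
By the inner product expanding condition (A2) applied to $e^{tD}x$ and $y$, together with $\lambda(e^{tD}x)=\lambda(x)$, we get
\[
f(t)\;\leq\;\bigl\langle \lambda(e^{tD}x),\lambda(y)\bigr\rangle\;=\;\bigl\langle\lambda(x),\lambda(y)\bigr\rangle.
\]
The strong commutativity assumption says exactly that the right-hand side equals $\langle x,y\rangle=f(0)$. Hence $f(t)\leq f(0)$ for all $t\in\R$, i.e.\ $t=0$ is a global maximizer of $f$.

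Finally, $f$ is smooth in $t$ because $t\mapsto e^{tD}$ is smooth in operator norm, and $f'(t)=\langle D\,e^{tD}x,y\rangle$, so $f'(0)=\langle Dx,y\rangle$. The usual first-order necessary condition $f'(0)=0$ therefore yields $\langle Dx,y\rangle=0$. Since $D\in\mathrm{Lie}({\cal G})$ was arbitrary, $x$ commutes with $y$ relative to ${\cal G}$, as required. I do not anticipate any real obstacle here: the only subtle point is verifying that $\lambda(e^{tD}x)=\lambda(x)$ holds for every $t$ (not just on some neighborhood of $0$), but this is immediate from the definition of $\mathrm{Lie}({\cal G})$ combined with $e^{tD}\in{\cal G}\subseteq\mathrm{Aut}(\V)$.
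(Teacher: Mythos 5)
Your proof is correct and follows essentially the same approach as the paper: both define the scalar function $\phi(t)=\langle e^{tD}a,b\rangle$, use (A2) together with the automorphism property $\lambda(e^{tD}a)=\lambda(a)$ and strong commutativity to show $t=0$ is a global maximizer, and then invoke $\phi'(0)=0$ to get $\langle Da,b\rangle=0$.
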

\begin{proof} Consider a semi-FTvN system $(\V,\W,\lambda)$ and fix a closed subgroup ${\cal G}$ of $\mathrm{Aut(\V)}$.  Suppose $a$ and $b$ strongly commute, that is, $\langle a,b\rangle=\langle \lambda(a),\lambda(b)\rangle$. Fix any $D\in \mathrm{Lie({\cal G})}$ and consider the real-valued function
$$\phi(t):=\big\langle \exp(tD)a,b\big\rangle\quad (t\in \R).$$
Then, $\phi$ is differentiable and 
$$\phi(t)=\big\langle \exp(tD)a,b\big\rangle\leq \big\langle \lambda(\exp(tD)a),\lambda(b)\big \rangle=\big\langle \lambda(a),\lambda(b)\big\rangle=\langle a,b\rangle =\phi(0),$$
where the first inequality is due to the definition of a semi-FTvN system, the second equality is due to the definition of an automorphism, and the third equality is due to the strong commutativity of $a$ and $b$. So, $\phi$ attains its global maximum at $t=0$. From $\phi^\prime(0)=0$, we have $$\langle Da,b\rangle=\phi^\prime(0)=0.$$ Since $D$ is arbitrary in $\mathrm{Lie({\cal G})}$, $a$ and $b$ commute relative to ${\cal G}$. When ${\cal G}=\mathrm{Aut(\V)}$, we get the commutativity of $a$ and $b$. 
 \end{proof}

\subsection{The center and unit element}

The concepts of center and unit element have been defined and found to be useful in the setting of FTvN systems \cite{gowda-jeong-ftvn}. Now we extend these to semi-FTvN systems.

\begin{definition}\label{center and weak center}
    In a semi-FTvN system $(\V,\W,\lambda)$, we define the {\it center} ${\cal C}$ and {\it weak center} ${\cal C}_w$ as follows:
$${\cal C}:=\{x\in \V: x\,\,\mbox{strongly commutes with every}\,\,y\in \V\},$$
$${\cal C}_w:=\{x\in \V: x\,\,\mbox{  commutes with every}\,\,y\in \V\}=\{x\in \V: Dx=0\,\,\mbox{for all}\,\,D\in \mathrm{Lie(Aut(\V))}\}.$$
\end{definition}

Note: The second equality in ${\cal C}_w$ is due to the fact that $\langle Dx,y\rangle=0$ for all $y$ implies that $Dx=0$.
 
\begin{proposition} 
In a semi-FTvN system $(\V,\W,\lambda)$, 
the center ${\cal C}$ and the weak center ${\cal C}_w$  are closed (linear) subspaces of $\V$. Moreover, ${\cal C}\subseteq {\cal C}_w$ and $\lambda$ is linear on ${\cal C}$.
\end{proposition}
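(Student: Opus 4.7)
The plan is to prove the four claims in sequence, using Proposition \ref{commutativity equivalence} to replace the defining inner-product identity of the center by the more tractable additivity identity $\lambda(x+y)=\lambda(x)+\lambda(y)$. First I would record this characterization: $x\in {\cal C}$ if and only if $\lambda(x+y)=\lambda(x)+\lambda(y)$ for every $y\in \V$. This formulation is what makes $\cal C$ well-behaved under the vector-space operations.

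Next I would show $\cal C$ is a linear subspace. Closure under addition follows by picking $x_1,x_2\in {\cal C}$ and $y\in \V$ and applying the characterization twice:
\begin{equation*}
\lambda\big((x_1+x_2)+y\big)=\lambda(x_1)+\lambda(x_2+y)=\lambda(x_1)+\lambda(x_2)+\lambda(y),
\end{equation*}
together with $\lambda(x_1+x_2)=\lambda(x_1)+\lambda(x_2)$ (the case $y=x_2$ in the $x_1$ identity). For scalar closure, Proposition \ref{elementary properties of lambda}(a) handles $\alpha\geq 0$, while the case $\alpha<0$ reduces to showing $-x\in {\cal C}$ whenever $x\in {\cal C}$; and this follows by taking $y=-x$ in the additive identity, which gives $\lambda(-x)=-\lambda(x)$, and then checking the inner-product identity for $-x$ directly. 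Closedness of $\cal C$ is then immediate from the Lipschitz continuity of $\lambda$ (Proposition \ref{elementary properties of lambda}(b)): if $x_n\to x$ with $x_n\in {\cal C}$, then $\lambda(x_n)\to \lambda(x)$, so the identity $\langle x_n,y\rangle=\langle \lambda(x_n),\lambda(y)\rangle$ passes to the limit.

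For $\cal C_w$, I would simply observe that
\begin{equation*}
{\cal C}_w=\bigcap_{D\in \mathrm{Lie(Aut(\V))}}\ker D,
\end{equation*}
an intersection of kernels of bounded linear operators, hence a closed linear subspace. The inclusion ${\cal C}\subseteq {\cal C}_w$ is then a direct consequence of Theorem \ref{strong commutativity implies commutativity}: strong commutativity with every $y\in \V$ forces commutativity with every $y\in \V$. Finally, the linearity of $\lambda$ on $\cal C$ is already in hand: the additive property $\lambda(x+y)=\lambda(x)+\lambda(y)$ for $x,y\in {\cal C}$ follows from the characterization (restricted to $y\in {\cal C}\subseteq \V$), and homogeneity $\lambda(\alpha x)=\alpha \lambda(x)$ for all $\alpha\in \R$ is what we established in the scalar-closure step.

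I do not anticipate any significant obstacle. The one place to be a little careful is the negative-scalar case, where we must derive $\lambda(-x)=-\lambda(x)$ on $\cal C$ before we can confirm $-x\in {\cal C}$; but this is immediate once we evaluate the additive identity at $y=-x$ and use $\lambda(0)=0$ (which holds because $\lambda$ is norm-preserving).
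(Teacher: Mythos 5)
Your proof is correct and follows essentially the same route as the paper's: both reduce membership in ${\cal C}$ to the additivity identity $\lambda(x+y)=\lambda(x)+\lambda(y)$ via Proposition~\ref{commutativity equivalence}, use Proposition~\ref{elementary properties of lambda}(a) for nonnegative scalars and the identity $\lambda(-x)=-\lambda(x)$ (obtained by setting $y=-x$) for negative ones, invoke Lipschitz continuity of $\lambda$ for closedness, observe that ${\cal C}_w$ is an intersection of kernels, and appeal to Theorem~\ref{strong commutativity implies commutativity} for ${\cal C}\subseteq{\cal C}_w$. You merely spell out a couple of steps that the paper leaves as ``repeated use of Item (d)''.
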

\begin{proof} Clearly, ${\cal C}_w$ is a closed subspace of $\V$. That ${\cal C}\subseteq {\cal C}_w$ follows from  Theorem \ref{strong commutativity implies commutativity}.
We now show that ${\cal C}$ is a closed subspace of $\V$ and that $\lambda$ is linear on it.
Suppose $a,b\in {\cal C}$. Then, by  Proposition \ref{elementary properties of lambda}(a), any nonnegative multiple of $a$ is also in ${\cal C}$. Moreover, as $a$ strongly commutes with $-a$, by Proposition \ref{commutativity equivalence}(d), $\lambda(-a)=-\lambda(a)$. It follows that any multiple of $a$ is in ${\cal C}$. Now repeated use of Item (d) in Proposition \ref{commutativity equivalence} shows that $a+b\in {\cal C}$. That ${\cal C}$ is closed comes from the continuity of $\lambda$ and Item $(a)$ in Proposition \ref{commutativity equivalence}. Finally,  Proposition \ref{commutativity equivalence}(d)  and positive homogeneity of $\lambda$ gives the linearity on ${\cal C}$.
\end{proof}

\smallgap

\begin{definition} In the setting of a  semi-FTvN system $(\V,\W,\lambda)$,  a nonzero element $e\in \V$ is said to be 
     a unit  if  ${\cal C}=\R\,e$.
\end{definition}

\gap

\noindent{\bf Remarks.} In an FTvN system, the center can be described in many ways, see Section 6 in \cite{gowda-jeong-ftvn}. In particular, it is shown in \cite{gowda-jeong-ftvn}, Proposition 6.3, that $u\in {\cal C}$ if and only if $[u]=\{u\}.$
A result of this type is proved below for a semi-FTvN system.
\gap

\begin{proposition}
Let $(\V,\W,\lambda)$ be a semi-FTvN system. Consider the following statements:
\begin{itemize}
    \item [$(i)$] $c\in {\cal C}$.
    \item [$(ii)$] $[c]=\{c\}$.
    \item [$(iii)$] $Ac=c$ for all $A\in \Aut(\V)$.
    \item [$(iv)$] $c\in {\cal C}_w$.
    \end{itemize}
    Then, $(i)\Rightarrow (ii)\Rightarrow (iii)\Rightarrow (iv)$.
    In particular, if $e$ is a unit element, then $Ae=e$ for all $A\in \Aut(\V,\W,\lambda)$.
\end{proposition}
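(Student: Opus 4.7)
My plan is to prove the three implications in order, exploiting the remark preceding the proposition (that two strongly commuting elements of the same $\lambda$-orbit coincide) and the basic fact that elements of $\Lie(\Aut(\V))$ are tangent vectors at $I$ to curves in $\Aut(\V)$.

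For $(i)\Rightarrow (ii)$, suppose $c\in \mathcal{C}$ and let $x\in [c]$, i.e.\ $\lambda(x)=\lambda(c)$. Since $c$ strongly commutes with every element of $\V$, it strongly commutes with $x$. Now both $x$ and $c$ lie in the orbit $[c]$, so the remark immediately following Proposition~\ref{commutativity equivalence} gives $x=c$. (If one prefers a direct argument: strong commutativity gives $\langle x,c\rangle=\langle\lambda(x),\lambda(c)\rangle=\|\lambda(c)\|^{2}=\|c\|^{2}$, and since $\|x\|=\|\lambda(x)\|=\|\lambda(c)\|=\|c\|$, expanding $\|x-c\|^{2}$ yields zero.)

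For $(ii)\Rightarrow (iii)$, this is essentially immediate from the definition of an automorphism: if $A\in \Aut(\V)$, then $\lambda(Ac)=\lambda(c)$, so $Ac\in [c]=\{c\}$, forcing $Ac=c$. For $(iii)\Rightarrow (iv)$, take any $D\in \Lie(\Aut(\V))$. By definition $\exp(tD)\in \Aut(\V)$ for every $t\in \R$, so by $(iii)$ we have $\exp(tD)c=c$ for all $t$. Differentiating this identity at $t=0$ yields $Dc=0$. Since $D$ was arbitrary in $\Lie(\Aut(\V))$, we conclude $c\in \mathcal{C}_{w}$.

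The particular final statement follows at once: if $e$ is a unit element, then $\mathcal{C}=\R e$ and in particular $e\in \mathcal{C}$, so the implication $(i)\Rightarrow (iii)$ gives $Ae=e$ for every $A\in \Aut(\V,\W,\lambda)$. None of the steps presents a real obstacle; the only subtle point is invoking the correct form of the earlier remark in $(i)\Rightarrow (ii)$, so I would state it cleanly before applying it rather than reprove it from scratch.
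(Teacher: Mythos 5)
Your proof is correct and follows essentially the same route as the paper: $(i)\Rightarrow(ii)$ via the strong-commutativity-on-the-same-orbit observation (equivalently, the norm identity in Proposition~\ref{commutativity equivalence}(b)), $(ii)\Rightarrow(iii)$ from $\lambda(Ac)=\lambda(c)$, and $(iii)\Rightarrow(iv)$ by differentiating $\exp(tD)c=c$ at $t=0$. The handling of the ``in particular'' clause via $e\in\mathcal{C}=\R e$ is also the intended one.
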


\noindent{Note:} Example \ref{isolated lambda orbit} below shows that the (reverse) implication $(ii)\Rightarrow (i)$ need not hold. Also, Example \ref{usual rn} shows that the (reverse) implication $(iv)\Rightarrow (iii)$ need not hold (even in an FTvN system). 
\smallgap

\begin{proof}
$(i)\Rightarrow (ii)$: Assume $(i)$,  fix $c\in {\cal C}$ and $x\in \V$ such that  $\lambda(x)=\lambda(c)$. As $c$ strongly commutes with $x$, from Proposition 3.5(b), $||x-c||=||\lambda(x)-\lambda(c)||=0$. So, $x=c$. This gives $(ii)$.\\
$(ii)\Rightarrow (iii)$: Assume $(ii)$ and let $A\in \Aut(\V)$. Then,  $Ac\in [c]=\{c\}$, so $Ac=c$.\\
$(iii)\Rightarrow (iv)$: Assume $Ac=c$ for all $A\in \Aut(\V)$. Then, for any $D\in \Lie(\Aut(\V))$ and $t\in \R$, we have  $\exp(tD)\in \Aut(\V)$, hence  $\exp(tD)c=c$. Now, differentiation leads to $Dc=0$. By Definition \ref{center and weak center}, $c\in {\cal C}_w$.
\end{proof}

\begin{definition}
In a semi-FTvN system $(\V,\W,\lambda)$, a set $E\subseteq \V$ is said to be a
\begin{itemize}
\item a spectral set if $x\in E\Rightarrow [x]\subseteq E$ or, equivalently, $E=\lambda^{-1}(Q)$ for some $Q\subseteq \W$;
\item a weakly spectral set if $A(E)\subseteq E$ for all $A\in \Aut(\V,\W,\lambda)$.
\end{itemize}
\end{definition}

\smallgap

\begin{proposition}
In a semi-FTvN system, the center is a spectral set and the weak center is a weakly spectral set.
\end{proposition}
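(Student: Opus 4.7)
The plan is to handle the two claims separately, with the first reducing immediately to the previous proposition and the second coming from a standard conjugation argument in the Lie algebra.

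For the center, I would take $c\in {\cal C}$ and any $x\in [c]$, and simply invoke the implication $(i)\Rightarrow (ii)$ of the previous proposition, which gives $[c]=\{c\}$. Hence $x=c\in {\cal C}$, so $[c]\subseteq {\cal C}$, which by the definition of a spectral set shows that ${\cal C}$ is spectral. This step is essentially free once the preceding proposition is in hand.

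For the weak center, I would show that for each $c\in {\cal C}_w$ and each $A\in \Aut(\V)$, the element $Ac$ is annihilated by every $D\in \Lie(\Aut(\V))$. The key observation is that $\Aut(\V)$ is a group (a closed subgroup of ${\cal O}(\V)$), so $A^{-1}\in \Aut(\V)$ as well, and conjugation preserves the Lie algebra: for each $t\in \R$,
\[
\exp\bigl(t A^{-1}DA\bigr) \;=\; A^{-1}\exp(tD)\,A \;\in\; \Aut(\V),
\]
so $A^{-1}DA \in \Lie(\Aut(\V))$. Since $c\in {\cal C}_w$, Definition~\ref{center and weak center} gives $(A^{-1}DA)c=0$, and applying $A$ on the left yields $D(Ac)=0$. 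As $D\in \Lie(\Aut(\V))$ was arbitrary, $Ac\in {\cal C}_w$, which means $A({\cal C}_w)\subseteq {\cal C}_w$ for every $A\in \Aut(\V)$, i.e., ${\cal C}_w$ is weakly spectral.

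Neither step presents a real obstacle; the only subtlety is noticing that one should not try to verify the spectral property of ${\cal C}$ directly from strong commutativity (which would require reproving that strongly-commuting elements in the same $\lambda$-orbit coincide), but instead use the already-established collapse $[c]=\{c\}$ for $c\in {\cal C}$. For the weak center, the essential point is the conjugation identity $\exp(tA^{-1}DA)=A^{-1}\exp(tD)A$ together with the group structure of $\Aut(\V)$.
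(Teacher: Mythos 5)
Your proposal is correct and follows essentially the same two-step argument as the paper: reducing the spectral property of ${\cal C}$ to the collapse $[c]=\{c\}$ from the preceding proposition, and establishing the weakly spectral property of ${\cal C}_w$ via the conjugation identity $\exp(tA^{-1}DA)=A^{-1}\exp(tD)A$. No significant differences.
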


\begin{proof}
To simplify the notation, let ${\cal G}:=\Aut(\V)$.\\
If $x\in {\cal C}$, then by the above proposition, $[x]= \{x\}\subseteq {\cal C}$. Hence, the center is a spectral set.\\
 To show that ${\cal C}_w$ is weakly spectral, we need to show that $A({\cal C}_w)\subseteq {\cal C}_w$ for all $A\in \G$. In view of Definition \ref{center and weak center}, we need to show: 
\begin{center}
    for any $c\in {\cal C}_w$ $A\in {\cal G}$, and $D\in \Lie({\cal G})$, it holds that
$ D(Ac)=0$.
\end{center}
We fix $c\in {\cal C}_w$,  $A\in {\cal G}$, and $D\in \Lie({\cal G})$. Then, for any $t\in \R$, $exp(tD)\in {\cal G}$,
$$exp(tA^{-1}DA)=A^{-1}exp(tD)A\in {\cal G}$$ and so, $A^{-1}DA\in \Lie({\cal G})$. As $c\in {\cal C}_w$, we have $A^{-1}DAc=0$ and $DAc=0$. As $D$ is arbitrary in $\Lie({\cal G})$, $Ac\in {\cal C}_w$.
This completes the proof.
\end{proof}

We now describe some examples. Recall that in the setting of  a semi-FTvN system $(\V,\W,\lambda)$, $\mathrm{Aut(\V)}$ is an abbreviation for 
$\mathrm{Aut(\V,\W,\lambda)}$.

\begin{example}\label{absolute value example}
Consider the semi-FTvN system $(\R,\R,\lambda)$, where $\lambda(x)=|x|$.
As\\ $\mathrm{Aut(\R)}=\{I,-I\}$ and $\mathrm{Lie(\Aut(\R))}=\{0\}$, where $I$ and $0$ denote the Identity and zero transformations respectively, we see that any two elements $a,b\in \R$  commute. On the other hand,  $a$ and $b$ strongly commute (that is, $ab=|a|\,|b|$) if and only if one of them (either $a$ or $b$) is a nonnegative multiple of the other. Thus, in this system,
    ${\cal C}=\{0\}$ while ${\cal C}_w=\R$. 
    
\end{example}

\smallgap

\begin{example}
 Consider the semi-FTvN system $(\V,\R,\lambda)$, where $\V$  is a  real finite dimensional inner product space with $dim(\V)>1$ and  $\lambda(x)=||x||.$  By isomorphism considerations, we may assume that $\V=\R^n$ ($n>1$) with the usual inner product. Consider $a,b\in \R^n$. We claim that {\it  $a$ and $b$  commute in the semi-FTvN system $(\R^n,\R,\lambda)$ if and only if $b$ is a scalar multiple of $a$ or $a$ is a scalar multiple of $b$}. To see this, suppose without loss of generality, $a\neq 0$.
  As  $\mathrm{Aut(\V, \R,\lambda)}={\cal O}(n)$ (the orthogonal group of $\R^n$) and $n>1$,  the corresponding  Lie algebra consists of all skew-symmetric matrices (\cite{hall}, 2.5.5); hence,  $a$ and $b$  commute if and only if $\langle Da,b\rangle=0$ for every skew-symmetric matrix $D$. It is known that the orthogonal complement of the space of the skew-symmetric matrices is the space of the symmetric matrices relative to the trace inner product $\langle A,B\rangle_{\tr}={\tr}(AB^\top)$. Since $\langle Da,b\rangle=\langle D,ba^\top\rangle_{\tr}$, we see that $a$ and $b$ commute if and only if $ba^\top$ is symmetric. Then, $ba^\top=ab^\top$, hence $ba^\top a=ab^\top a$. Since $a\neq 0$, we have $b=\frac{b^\top a}{\Vert a\Vert^2}a$. Thus, $b$ is a scalar multiple of $a$.  Conversely, if $b$ is a scalar multiple of $a$, then $\langle Da,b\rangle=0$ for every skew-symmetric matrix $D$, hence $a$ and $b$  commute. This proves our claim. Now suppose $a\in {\cal C}_w$. Let $\one$ be the vector of all $1$s and $d$ be a vector with all entries distinct (recall that $n>1$). Since $a$  commutes with both $\one$ and $d$, we must have  $a=0$. Hence  ${\cal C}_w=\{0\}$. 
  We remark that $a$ and $b$ strongly commute in this system if and only if  $\langle a,b\rangle=||a||\,||b||$ in which case, $a$ (or $b$) is a nonnegative multiple of $b$ (respectively, $a$). Consequently, since $-a$ is not a nonnegative multiple of $a$ unless $a=0$, we see that
  (the center) ${\cal C}=\{0\}$. Thus, in this system, ${\cal C}={\cal C}_w=\{0\}$. {\it Note that this is different from the $n=1$ case (of Example   \ref{absolute value example}).}
  \end{example}

\smallgap

\begin{example} \label{usual rn}
Consider $\R^n$ with the usual inner product. For any $x\in \R^n$, let $x^\downarrow$ denote the vector obtained by rearranging the entries of $x$ in the decreasing order. As $\langle x,y\rangle \leq \langle x^\downarrow,y^\downarrow\rangle$ (which is the Hardy-Littlewood-Polya inequality) and $||x^\downarrow||=||x||$, we see that $(\R^n,\R^n,\lambda)$ is a semi-FTvN system, where $\lambda(x)=x^\downarrow$. (Actually, this is an FTvN system arising from the Euclidean Jordan algebra  $(\R^n, \langle \cdot,\cdot\rangle, \circ)$, where, for $x,y\in \R^n$, $\langle x,y\rangle$ is the usual inner product and $x\circ y$ is the componentwise product.) As $\mathrm{Aut}(\R^n,\R^n,\lambda)$ is the group of all permutation matrices on $\R^n$, we see that the corresponding Lie algebra is $\{0\}$. Hence, any two elements in this semi-FTvN system commute.  With ${\bf 1}$ denoting the vector of ones in $\R^n$, we shall see (in Theorem \ref{weak equals operator commutativity}) that
    ${\cal C}=\R{\bf 1}$ while ${\cal C}_w=\R^n$. 
    We note that when $n>1$, the vectors $a=(1,0,\ldots,0)$ and $b=(0,1,0,\ldots, 0)$  commute, but not strongly:  $\langle a,b\rangle=0$, while $\langle \lambda(a),\lambda(b)\rangle =1$.
\end{example}

\begin{example}\label{isolated lambda orbit}
    In $\R^2$, let $\V:=\{r(1,2):r\in \R\}.$ Consider $\lambda:\V\rightarrow \R^2$ with $\lambda(x)=x^\downarrow.$ Then, $(\V,\R^2,\lambda)$ is a semi-FTvN system, where for any $x\in \V$, $[x]=\{y\in \V:\lambda(x)=\lambda(y)\}=\{x\}$. It follows that (\ref{ftvn defn}) fails and so, $(\V,\R^2,\lambda)$ is not an FTvN system.  Morover, $\mathrm{Aut(\V)}=\{I\}$, ${\cal C}=\{0\}$, and ${\cal C}_w=\V$.
\end{example}

{\it Note}: It will be shown in the more general setting of a Euclidean Jordan algebra that commutativity is the same as operator commutativity. Consequently, in the Euclidean Jordan algebra ${\cal S}^n$ of all $n\times n$ real symmetric matrices, commutativity is the same as the usual commutativity of matrices.

\gap

\begin{example} \label{NDS} ({\it Normal decomposition system} \cite{lewis}) Let $\V$ be a real inner product space, $\G$ be a closed subgroup of the orthogonal group of $\V$, and $\gamma : \V  \to \V$ be a map satisfying the following conditions:
	\begin{itemize}
		\item [$(a)$] $\gamma$ is $\G$-invariant, that is, $\gamma(Ax) = \gamma(x)$ for all $x \in \V$ and $A \in \G$.
		\item [$(b)$] For each $x\in \V$, there exists $A\in \G$ such that $x=A\gamma(x)$.
		\item [$(c)$] For all $x,y\in \V$, we have $\ip{x}{y} \leq \ip{\gamma(x)}{\gamma(y)}$.
	\end{itemize}
	Then, $(\V, \G, \gamma)$ is called a {\it normal decomposition system}.
In this setting, we have (cf. \cite{lewis}, Proposition 2.3 and  Theorem 2.2):
 For any two elements $x$ and $y$ in $\V$, 
		\[ \max_{A \in \G}\, \ip{Ax}{y} = \ip{\gamma(x)}{\gamma(y)} \]
		and $\ip{x}{y} = \ip{\gamma(x)}{\gamma(y)}$  if and only if there exists an $A \in \G$ such that $x = A\gamma(x)$ and $y = A\gamma(y)$.

As observed in \cite{gowda-ftvn}, if $(\V, \G, \gamma)$ is a  normal decomposition system, then  $(\V, \V, \gamma)$  becomes a FTvN system. We refer to \cite{gowda-jeong-ftvn}, Theorem 7.3, for a characterization of normal decomposition systems among FTvN systems.   From the above result, we see a straightforward description of strong commutativity. For commutativity, we observe that ${\cal G}$ is a closed subgroup of the (possibly larger) group  $\mathrm{Aut(\V,\V,\gamma)}$. Since the group ${\cal G}$ is readily available in this setting, it may be advantageous to work with `commutativity relative to ${\cal G}$'. 
As in the proof of Theorem \ref{strong commutativity implies commutativity}, we see that strong commutativity implies commutativity relative to ${\cal G}$.
\end{example}

\begin{example}
Let $M_{m,n}(\C)$ denote the space of all $m\times n$ 
complex matrices under the inner product 
$\langle X,Y\rangle={\rm Re}\,\tr(XY^*)$, where $Y^*$ denotes the conjugate-transpose of $X$. With $\lambda(X):= \Diag(s(X))$ denoting the (rectangular) diagonal matrix of the singular values of $X$ written in decreasing order, we recall von Neumann's result: For $X, Y\in M_{m,n}(\C)$,
$$\langle X,Y\rangle\leq \langle \lambda(X),\lambda(Y)\rangle$$ with equality if and only if there exist unitary matrices $U$ and $V$ such that 
$X=U\, \Diag(s(X))\,V$ and $Y=U\, \Diag(s(Y))\,V.$ Using the above inequality, for any given $A,B\in M_{m,n}(\C)$ with $A=U\, \Diag(s(A))\,V$, we can define $X:=U\, \Diag(s(B))V$ to verify that 
\begin{equation} \label{M_{m,n} is ftvn }
	\max \Big\{\! \ip{A}{X} : \, X \in [B] \Big\} = \langle \lambda(A),\lambda(B)\rangle \quad (\forall\, A, B \in \V).
\end{equation}
Thus, $\big (M_{m,n}(\C),M_{m,n}(\C),\lambda\big )$ is a FTvN system. 
In this system,  von Neumann's result characterizes strong commutativity. In particular, we have the following:

\begin{center}
 {\it    If $X,Y\in M_{m,n}(\C)$ and
    $\langle X,Y\rangle =\langle \lambda(X),\lambda(Y)\rangle$, then $XY^*\,\,\mbox{and}\,\,X^*Y$ are Hermitian.
}
\end{center}
By replacing complex numbers by real numbers, we can consider the space $M_{m,n}(\R)$ and obtain analogous results. In particular, we see that $\big (M_{m,n}(\R),M_{m,n}(\R),\lambda\big )$ is an FTvN system. (Note: The unitary matrices become  orthogonal matrices, see \cite{horn-johnson}, Theorem 7.3.5 or \cite{horn-johnson-topics}, Theorem 3.1.1,  and  \cite{lewis-sendov}, Theorem 4.6.) 
Moreover, 
\begin{center}
 {\it    If $X,Y\in M_{m,n}(\R)$ and
    $\langle X,Y\rangle =\langle \lambda(X),\lambda(Y)\rangle$, then $XY^T\,\,\mbox{and}\,\,X^TY$ are symmetric.
}
\end{center}

We recall, see Corollary \ref{commutativity in Mn}, that in the particular setting of $M_{n,n}(\R)$ (which is $M_n$ as per our earlier notation),
the condition that $XY^T\,\,\mbox{and}\,\, X^TY$ are symmetric
describes commutativity relative to a group of transformations. Results analogous to this seem to hold for $M_{m,n}(\R)$ and $M_{m,n}(\C)$, see \cite{lewis-sendov}.
\\

We draw attention to the following characterization result due to Lewis and Sendov, see \cite{lewis-sendov}, Lemma 4.3. We state the result (only) for $M_n:=M_{n,n}(\R)$. Recall that a signed permutation is a square matrix where each row and column has exactly one nonzero entry which is $\pm 1$.

\begin{theorem}
Let  $X,Y\in M_{n}$. Then $XY^T$ and $X^TY$ are symmetric if and only if there is a signed permutation $Q$ and orthogonal matrices $U$ and $V$ such that 
$X=U \Diag(Qs(X))V$ and $Y=U \Diag(s(Y))V.$
\end{theorem}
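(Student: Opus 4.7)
I would prove the two implications separately. The ``if'' direction is a direct calculation: if $X=U\Diag(Qs(X))V$ and $Y=U\Diag(s(Y))V$ with $U,V$ orthogonal, then $VV^T=I$ gives $XY^T=U\,\Diag(Qs(X))\Diag(s(Y))\,U^T$ and similarly $X^TY=V^T\Diag(Qs(X))\Diag(s(Y))V$; each is of the form ``orthogonal $\times$ diagonal $\times$ orthogonal transpose,'' hence symmetric.

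For the ``only if'' direction, fix an SVD $Y=U_0 D V_0$ with $D=\Diag(s(Y))$ and set $X':=U_0^T X V_0^T$. The same computation shows that the hypothesis ``$XY^T$ and $X^TY$ are symmetric'' translates to the coupled matrix equations $X'D = D(X')^T$ and $DX' = (X')^T D$. Writing these out entrywise, one obtains $x'_{ij}d_j = d_i x'_{ji}$ and $d_i x'_{ij} = x'_{ji} d_j$ for all $i,j$. A short case analysis on whether $d_i$ equals or differs from $d_j$ (and whether either vanishes) forces $X'$ to be block-diagonal with respect to the partition of $\{1,\dots,n\}$ induced by the distinct values of $s(Y)$, with $X'$ \emph{symmetric} on each block where $D$ restricts to a positive scalar multiple of the identity and \emph{unconstrained} on the block where $D$ vanishes.

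The next step exploits the non-uniqueness of the SVD of $Y$. Replacing $(U_0,V_0)$ by $(U_0 P^T, R^T V_0)$ for orthogonal $P,R$ preserves the decomposition $Y=U_0 D V_0$ precisely when $PDR=D$; blockwise this forces $P,R$ to be block-diagonal with $R=P^T$ on every positive-value block of $D$, while on the zero block of $D$ the matrices $P$ and $R$ are independent orthogonal matrices. Under this change of SVD, $X'$ transforms to $PX'R$. On each positive-value block I use the spectral theorem to choose $P$ so that $PX'P^T$ is a real (possibly sign-indefinite) diagonal matrix; on the zero block I use an ordinary SVD of the restriction of $X'$ to choose $P$ and $R$ making $PX'R$ a nonnegative diagonal matrix. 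The resulting $X'$ is then fully diagonal, and because the whole passage $X\mapsto PU_0^T X V_0^T R$ is an orthogonal-on-both-sides action, the absolute values of its diagonal entries equal the entries of $s(X)$ in some order. Hence that diagonal can be written as $Qs(X)$ for a signed permutation matrix $Q$, yielding $X=U\Diag(Qs(X))V$ and $Y=U\Diag(s(Y))V$ with $U:=U_0 P^T$ and $V:=R^T V_0$.

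The main obstacle I anticipate is the bookkeeping around the SVD freedom of $Y$: one must verify that this freedom is exactly rich enough to diagonalize $X'$ blockwise while keeping the decomposition of $Y$ intact, and that the resulting diagonal is an authentic signed permutation of $s(X)$ rather than merely a diagonal with the right absolute values. The key structural point is the dichotomy ``$R=P^T$ on positive-value blocks versus $P,R$ independent on the zero block,'' which is precisely what distinguishes an eigendecomposition of a symmetric matrix (where negative eigenvalues may occur, producing the sign changes in $Q$) from an SVD of a general square matrix (where only nonnegative singular values appear).
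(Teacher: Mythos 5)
The paper does not actually prove this result: it is stated as a citation to Lemma~4.3 of Lewis and Sendov and invoked without argument, so there is no in-paper proof to compare against. Your proof is correct and self-contained. Passing to the SVD frame of $Y$ and reducing the hypothesis to the pair of equations $X'D = D(X')^T$ and $DX' = (X')^T D$ is right; the entrywise case analysis (using $d_i^2\neq d_j^2$ when $d_i\neq d_j\geq 0$, and the mixed zero/nonzero case) correctly forces $X'$ to be block-diagonal with respect to the level sets of $s(Y)$, symmetric on each positive-value block and unconstrained on the kernel block. The key structural observation is also sound: from $PDR=D$ one gets $PD^2P^T=D^2$ and $R^TD^2R=D^2$, so $P,R$ are block-diagonal, and on a positive-value block $PdIR=dI$ forces $R=P^T$ there while on the kernel block $P,R$ are free. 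Diagonalizing each symmetric block by the spectral theorem (admitting negative eigenvalues, the source of the signs in $Q$) and SVD-ing the kernel block then yields a genuinely diagonal $X''$ whose diagonal must equal $Qs(X)$ for some signed permutation $Q$, since $X''$ is orthogonally equivalent to $X$ on both sides. This is essentially the standard argument for such simultaneous-decomposition characterizations and is in the spirit of Lewis and Sendov's own development.
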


We end this example by describing the center ${\cal C}$ and weak center ${\cal C}_w$ of $M_n$.  
If $C\in {\cal C}$,  then $\langle C,X\rangle =\langle \lambda(C),\lambda(X)\rangle$ for all $X\in M_n(\R).$ In particular, $\langle C,-C\rangle =\langle \lambda(C),\lambda(-C)\rangle$. As $\lambda(C)=\lambda(-C)$, we get $\langle C,-C\rangle\geq 0$ and, consequently, $C=0$. Thus, ${\cal C}=\{0\}$.
\\
Now consider the case $n=1$. Then, $M_1=\R$ and $\lambda(x)=|x|$. From Example \ref{absolute value example}, ${\cal C}_w=\R.$ On the other hand, suppose $n>1$ and $X\in {\cal C}_w$. Then, for every $Y\in M_n$, $\langle AX+XB,Y\rangle=0$, where $A$ and $B$ are arbitrary skew-symmetric matrices. (See the proof of Item $(ii)$ in Theorem 2.9 with ${\cal G}_1={\cal G}_2={\cal O}(n)$.) We see that $AX=0$ for every skew-symmetric matrix $A$,
consequently, $X=0$. Thus, when $n>1$, ${\cal C}_w=\{0\}$.

\end{example}
\section{The FTvN system induced by a Euclidean Jordan algebra}

     Suppose $(\V, \langle\cdot,\cdot\rangle, \circ)$ is a Euclidean Jordan algebra of rank $n$  with unit element $e$ \cite{faraut-koranyi} . Here, for any two elements $x$ and $y$, $\langle x,y\rangle$ denotes the inner product and $x\circ y$ denotes the Jordan product. For any $x\in \V$, by the spectral decomposition theorem (\cite{faraut-koranyi}, Theorem III.1.2), we can write
     \begin{equation}\label{spectral decomposition}
     x=x_1e_1+x_2e_2+\cdots +x_ne_n,
     \end{equation}
     where $\{e_1,e_2,\ldots, e_n\}$ is a Jordan frame and $x_1,x_2,\ldots, x_n$ are (unique) real numbers (called the eigenvalues of $x$). We define the trace of $x$ as  $\tr(x):=x_1+x_2+\cdots+x_n$ and trace inner product over $\V$ by $\langle x,y\rangle_{\tr}: =\tr(x\circ y)$. This inner product is compatible with the Jordan product. Moreover, Jordan frames, eigenvalues and spectral decompositions remain the same when we replace the given inner product with the trace inner product. {\it Henceforth, in the setting of a Euclidean Jordan algebra, we assume that $\V$ carries this trace inner product.} \\Let 
     $\Aut(\V)$ denote the group of all invertible linear transformations on $A$ satisfying the condition 
     $$A(x\circ y)=Ax\circ Ay\,\,\,\mbox{for all}\,\,x,y\in \V.$$
     Elements of  $\Aut(\V)$ are called (algebra) automorphisms of $\V$. (These are different from the `cone' automorphisms, namely, elements of $\Aut(K)$, where $K$ is the symmetric cone of $\V$.)\\
     Given the spectral decomposition of $x$ (as above), let $\lambda(x)$ in $\R^n$ denote the vector of eigenvalues of $x$ written in the decreasing order, that is,
     $$\lambda(x)=(x_1,x_2,\ldots, x_n)^\downarrow.$$
     Then, it is shown in \cite{gowda-jeong-ftvn}, Section 4, Example 4.6, that $(\V,\R^n, \lambda)$ is an FTvN system. In this setting, it is known, see \cite{gowda-jeong-ftvn}, Section 7, Example 4.6, that 
     \begin{center}
         {\it $A\in \mathrm{Aut(\V,\R^n,\lambda)}$ if and only if $A\in \mathrm{Aut(\V)}.$} 
         \end{center}
     (So, in the setting of a Euclidean Jordan algebra, the automorphisms of the FTvN system $(\V,\R^n,\lambda)$ are the same as the algebra automorphisms.)\\
     We recall that in a Euclidean Jordan algebra, two elements $a$ and $b$ {\it operator commute} if $L_aL_b=L_bL_a$ (where $L_a$ is the linear operator defined by $L_a(x):=a\circ x$ for all $x\in \V$), or equivalently, $a$ and $b$ have their spectral representations with respect to the same Jordan frame (\cite{faraut-koranyi}, Lemma X.2.2). We  say that $a$ and $b$ {\it strongly operator commute} if there exists a Jordan frame $\{e_1,e_2,\ldots, e_n\}$ such that  $a$ and $b$ have spectral decompositions
     $$a=a_1e_1+a_2e_2+\cdots+a_ne_n\quad\mbox{and}\quad b=b_1e_1+b_2e_2+\cdots+b_ne_n$$
     with $a_1\geq a_2\geq \cdots\geq a_n$ and $b_1\geq b_2\geq \cdots\geq b_n$. It is shown in \cite{gowda-ftvn}, Proposition 4.4, that {\it in the induced FTvN system $(\V,\R^n,\lambda)$ $a$ and $b$ strongly commute if and only if they strongly operator commute in the Euclidean Jordan algebra $\V$.}
     We now prove a similar result for  commutativity.
     
     \begin{theorem}\label{ commutativity equals operator commutativity}
         Consider a  Euclidean Jordan algebra $(\V, \langle\cdot,\cdot\rangle, \circ)$ of rank $n$ with  unit element $e$.  Let $a,b\in \V$. Then,  $a$ and $b$  commute in the FTvN system $(\V,\R^n,\lambda)$ if and only if they operator commute in the Euclidean Jordan algebra $\V$.
         \end{theorem}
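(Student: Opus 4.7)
The plan is to first translate commutativity in the FTvN system into a statement about derivations, and then carry out a Peirce decomposition argument. The key input is the classical identification $\Lie(\Aut(\V))=\mathrm{Der}(\V)$, the space of derivations of the Jordan product, valid for any Euclidean Jordan algebra (see, e.g., Faraut--Koranyi, \S VIII.5). Combined with the paper's observation that the FTvN-system automorphism group $\Aut(\V,\R^n,\lambda)$ coincides with the algebra automorphism group $\Aut(\V)$, the theorem reduces to the purely algebraic statement: $\langle Da,b\rangle=0$ for every $D\in\mathrm{Der}(\V)$ if and only if $a$ and $b$ operator commute.

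For the direction operator commute $\Rightarrow$ commute, I would fix a common Jordan frame $\{e_1,\ldots,e_n\}$ with $a=\sum_i a_ie_i$ and $b=\sum_j b_je_j$, and show $\langle De_i,e_j\rangle=0$ for all $i,j$ and all $D\in\mathrm{Der}(\V)$. The case $i=j$ is skew-adjointness of $D$ with respect to the trace inner product (since $\Aut(\V)\subseteq\mathcal{O}(\V)$). For $i\neq j$, differentiating the idempotent identity $e_i=e_i\circ e_i$ gives $De_i=2\,e_i\circ De_i$, whence $\langle De_i,e_j\rangle=2\langle De_i,e_i\circ e_j\rangle=0$ because $e_i\circ e_j=0$. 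Expanding $\langle Da,b\rangle=\sum_{i,j}a_ib_j\langle De_i,e_j\rangle$ then gives $0$.

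For the converse, I would fix a spectral decomposition $a=\sum_i a_ie_i$ and write the Peirce decomposition of $b$ with respect to $\{e_i\}$ as $b=\sum_i b_ie_i+\sum_{i<j}b_{ij}$, with $b_{ij}\in\V_{ij}$. For each pair $i<j$, the derivation to test against is $D_{ij}:=[L_{b_{ij}},L_{e_i}]$, which lies in $\mathrm{Der}(\V)$ because inner commutators $[L_x,L_y]$ of left multiplications are always derivations in a Jordan algebra. Using the Peirce multiplication rules ($e_i\circ b_{ij}=\tfrac12 b_{ij}$, $e_j\circ b_{ij}=\tfrac12 b_{ij}$, $e_k\circ b_{ij}=0$ for $k\notin\{i,j\}$, and $e_i\circ a=a_ie_i$), a short direct calculation yields $D_{ij}(a)=\tfrac14(a_i-a_j)\,b_{ij}$. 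Since distinct Peirce components of $b$ are mutually orthogonal, $\langle D_{ij}(a),b\rangle=\tfrac14(a_i-a_j)\,\|b_{ij}\|^2$, and the hypothesis forces $b_{ij}=0$ whenever $a_i\neq a_j$.

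This last condition says $b$ lies in the Peirce subalgebra determined by the spectral projectors of $a$; restricting $b$ to each eigenspace of $a$ and applying the spectral theorem in that Euclidean Jordan subalgebra produces a refined Jordan frame along which both $a$ and $b$ are spectrally decomposed, which is operator commutativity. I expect the Peirce computation of $D_{ij}(a)$ to be the main obstacle, since it requires juggling several Peirce identities at once; everything else is either standard Jordan structure theory or routine bookkeeping.
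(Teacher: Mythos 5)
Your proof is correct, but it takes a genuinely different route from the paper's. Both arguments begin from the same two facts: $\mathrm{Aut}(\V,\R^n,\lambda)=\mathrm{Aut}(\V)$ and $\Lie(\Aut(\V))=\mathrm{Der}(\V)$. From there, the paper invokes Koecher's theorem that \emph{every} derivation is a sum of inner derivations $[L_u,L_v]$, and then closes in one line via the duality $\langle [L_u,L_v]a,b\rangle=\langle [L_b,L_a]v,u\rangle$ (a consequence of the self-adjointness of $L$-operators under the trace inner product), which reduces the condition ``$\langle Da,b\rangle=0$ for all $D$'' directly to $[L_a,L_b]=0$. You instead run a Peirce-decomposition argument: in the forward direction you show that an arbitrary derivation annihilates all matrix entries $\langle De_i,e_j\rangle$ of a Jordan frame (via skew-adjointness for $i=j$ and the idempotent identity $De_i=2\,e_i\circ De_i$ for $i\neq j$); in the converse you test against the specific inner derivations $D_{ij}=[L_{b_{ij}},L_{e_i}]$, compute $D_{ij}(a)=\tfrac14(a_i-a_j)b_{ij}$, and conclude $b_{ij}=0$ whenever $a_i\neq a_j$, whence a common refined frame exists. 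What your route buys is that you never need the hard half of Koecher's theorem --- you only need the elementary fact that $[L_x,L_y]$ is a derivation --- at the cost of more Peirce bookkeeping; what the paper's route buys is brevity and the elegance of a single adjointness flip that handles both implications at once. Your $D_{ij}(a)$ computation is right (I checked: $e_i\circ a=a_ie_i$, $b_{ij}\circ a=\tfrac{a_i+a_j}{2}b_{ij}$, so $D_{ij}(a)=\tfrac{a_i}{2}b_{ij}-\tfrac{a_i+a_j}{4}b_{ij}=\tfrac{a_i-a_j}{4}b_{ij}$), and the final step passing from ``$b_{ij}=0$ for $a_i\neq a_j$'' to a common Jordan frame via the subalgebras $\V(c_k,1)$ attached to the spectral projectors $c_k$ of $a$ is standard and sound.
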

        \begin{proof} We have noted previously that $(\V,\R^n,\lambda)$ is a FTvN system. Furthermore,  as noted above,
    $\mathrm{Aut(\V,\R^n,\lambda)}=\mathrm{Aut(\V)}$. It is known (\cite{koecher}, Lemma 7 and Theorem 8) that a transformation $D\in \mathrm{Lie(\Aut(\V))}$ (called a derivation) if and only if it is a sum of transformations of the form $L_uL_v-L_vL_u$. So, $a$ and $b$ commute if and only if $\langle (L_uL_v-L_vL_u)a,b\rangle =0$ for all $u,v\in \V$. Since the latter statement simplifies to $\langle (L_aL_b-L_bL_a)u,v\rangle=0$, and $u,v$ are arbitrary, commutativity of $a$ and $b$ reduces to the equality  $L_aL_b-L_bL_a=0$, which is the operator commutativity of $a$ and $b$.
 
\end{proof}

 The following result provides a description of the center and weak center in a Euclidean Jordan algebra. Recall that the unit element $e$ in the algebra $\V$ satisfies the condition $x\circ e=x$ for all $x$.
 
 \begin{theorem}\label{weak equals operator commutativity}
      Suppose $\V$ is a Euclidean Jordan algebra with unit element $e$. Let ${\cal C}$ and ${\cal C}_w$ denote, respectively, the center and weak center in $\V$. Then the following statements hold: 
     \begin{itemize}  
     \item [$(i)$] ${\cal C}=\R\,e$;
         \item [$(ii)$] If $\V$ is simple, then ${\cal C}_w=\R\,e$;
         \item [$(iii)$] 
         If $\V=\Pi_{i=1}^{k}\V_i$, where $\V_i$ is a simple Euclidean Jordan algebra with unit $e_i$, then 
         ${\cal C}_w=\Pi_{i=1}^{k}\,\R\,e_i.$
         In particular, when  $\V=\R^n$ with componentwise product as Jordan product, we have ${\cal C}_w=\R^n$.
         \end{itemize}
         \end{theorem}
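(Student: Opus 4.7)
My strategy will be to reduce each of the three claims to facts about the underlying Euclidean Jordan algebra by invoking the two identifications established in the material preceding the theorem: strong commutativity in $(\V,\R^n,\lambda)$ coincides with strong operator commutativity (via Proposition 4.4 of \cite{gowda-ftvn}), and commutativity coincides with operator commutativity (Theorem \ref{ commutativity equals operator commutativity}).

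For (i), I will first note that $\alpha e$ strongly operator commutes with any $y\in\V$ by writing $\alpha e = \sum_i \alpha e_i$ with $\{e_i\}$ a Jordan frame in which $y$ is diagonalized with eigenvalues in decreasing order. For the reverse inclusion ${\cal C}\subseteq \R e$, the plan is to test the strong commutativity condition against $y=-c$: since $\lambda(-c)$ is the reversal of the negatives of the entries of $\lambda(c)$, the equation $\langle c,-c\rangle = \langle \lambda(c),\lambda(-c)\rangle$ becomes
\[
-\sum_{i} c_i^{2} \;=\; -\sum_{i} c_i\, c_{n+1-i},
\]
which rearranges to $\sum_i (c_i - c_{n+1-i})^2 = 0$; combined with the decreasing ordering of the $c_i$, this forces all eigenvalues to coincide, hence $c\in\R e$.

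For (ii), assume $\V$ is simple. Theorem \ref{ commutativity equals operator commutativity} identifies ${\cal C}_w$ with the set of $c\in\V$ that operator commute with every $y\in\V$. I will take such a $c$ and write its spectral resolution $c=\mu_{1}p_{1}+\cdots+\mu_{k}p_{k}$ with distinct eigenvalues $\mu_j$ and the uniquely determined pairwise orthogonal idempotents $p_j$ summing to $e$. The plan is then to invoke the standard fact that $y$ operator commutes with $c$ precisely when all off-diagonal Peirce components of $y$ relative to $\{p_1,\dots,p_k\}$ vanish, i.e.\ $y\in \bigoplus_j \V_1(p_j)$. If this holds for every $y\in\V$, then $\V$ splits as a direct sum of the Jordan ideals $\V_1(p_1),\dots,\V_1(p_k)$; simplicity then forces $k=1$, giving $c=\mu_1 e\in\R e$.

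For (iii), I would prove both inclusions separately. For ${\cal C}_w\subseteq \prod_i \R e_i$: every derivation of a factor $\V_i$ extends by zero to a derivation of $\V$ and hence lies in $\Lie(\Aut(\V))$, so if $c=(c_1,\dots,c_k)\in{\cal C}_w$ then each $c_i$ is annihilated by all derivations of $\V_i$, and (ii) gives $c_i\in\R e_i$. For the reverse, I plan to argue that the $e_i$ are the minimal central idempotents of $\V$, so any $A\in\Aut(\V)$ merely permutes the finite set $\{e_1,\dots,e_k\}$; by continuity, any $D\in\Lie(\Aut(\V))$ must satisfy $De_i=0$, hence $D(\alpha_i e_i)=0$ for all $\alpha_i\in\R$, placing $(\alpha_1 e_1,\dots,\alpha_k e_k)$ in ${\cal C}_w$. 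The $\V=\R^n$ case then follows by taking each simple factor to be $\R$ with unit $1$. The main obstacle I anticipate is the Peirce step in (ii): reducing ``$c$ operator commutes with every $y$'' to the vanishing of off-diagonal components requires a careful use of the full Peirce decomposition relative to a complete system of orthogonal idempotents, and then recognizing the resulting direct sum as an ideal decomposition to which simplicity applies. The rest is routine once these structural facts are in place.
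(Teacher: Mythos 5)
Your proposal is correct, but it follows a genuinely different path from the paper's, most notably in part (ii). For (i), the paper simply cites a known result (Example 4.6 in the Gowda--Jeong FTvN paper), whereas you give a self-contained argument by testing strong commutativity of $c$ against $-c$; that is essentially the same calculation the paper carries out explicitly for hyperbolic polynomials in Section 5, so it is a valid and arguably preferable way to make part (i) self-contained. For (ii), the paper argues topologically: if $a\in{\cal C}_w$ had two distinct eigenvalues, then every primitive idempotent of $\V$ would lie in one of the disjoint closed sets $E_i=\{x:\langle a,x\rangle=a_i\}$, contradicting the connectedness of the set of primitive idempotents in a simple algebra. You argue algebraically: write $c=\sum_j\mu_jp_j$ with distinct eigenvalues, observe that operator commutativity of $c$ with all of $\V$ forces the off-diagonal Peirce spaces $\V_{jl}$ ($j\ne l$) to vanish, so $\V=\bigoplus_j\V_1(p_j)$ with each $\V_1(p_j)$ an ideal (using $\V_1(p_j)\circ\V_1(p_l)=0$ for $j\ne l$), and simplicity forces $k=1$. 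Both are correct; the paper's argument outsources the structural input to a connectedness theorem, while yours replaces it with the Peirce multiplication table and the definition of an ideal, which is more elementary in character though it requires you to justify the ``standard fact'' that $y$ operator commutes with $c$ iff $y\in\bigoplus_j\V_1(p_j)$. For (iii), the paper reduces to componentwise operator commutativity; you instead decompose $\Lie(\Aut(\V))$ via extension-by-zero of derivations of the factors (for $\subseteq$) and the fact that any $D\in\Lie(\Aut(\V))$ annihilates the minimal central idempotents $e_i$ because automorphisms permute a finite set (for $\supseteq$). Both reductions amount to showing that derivations of $\prod\V_i$ are exactly tuples of derivations of the $\V_i$, so they are essentially equivalent, just phrased at a different level. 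No gaps; a clean alternative proof.
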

         \begin{proof}
         Item $(i)$ is known, see \cite{gowda-jeong-ftvn}, Section 6, Example 4.6.\\
         $(ii)$ Clearly, $\R\,e\subseteq {\cal C}_w$. To see the reverse inclusion, suppose that $\V$ is simple and $a\in {\cal C}_w$.  By the previous theorem, $a$ operator commutes with every $x\in \V$. In particular, $a$ operator commutes with every primitive idempotent $c$ in $\V$. Writing the spectral decompositions of $a$ and (the primitive idempotent) $c$ with respect to a common Jordan frame, we see that (every) $c$ must appear in some spectral decomposition of $a$. 
          Suppose, to get a contradiction, $a$ is not a multiple of $e$, that is, $a$ has two distinct eigenvalues. Let $a_1,a_2,\ldots, a_k$ $(k>1)$, be the distinct eigenvalues of $a$. Then, the closed sets
         $$E_i:=\{x\in \V: \langle a,x\rangle=a_i\}\,\,(1\leq i\leq  k)$$
         are disjoint. Since every primitive idempotent appears in some spectral decomposition of $a$ and $\V$ carries the trace inner product, we see that every primitive idempotent is in some $E_i$. This means that the set ${\cal T}(\V)$ of all primitive idempotents in $\V$ is contained in the disjoint union of closed sets $E_i$. However, ${\cal T}(\V)$ is connected in (any) simple algebra $\V$, see \cite{faraut-koranyi}, Page 71 or \cite{gowda-jeong-connected}, Theorem 3.1. We reach a contradiction. Thus, $a$ must be a multiple of $e$, proving our assertion.\\
         $(iii)$ Suppose $\V=\Pi_{i=1}^{k}\V_i$, where each $\V_i$ is simple. For $a,b\in \V$, we write $a=(a_1,a_2,\ldots, a_k)$ and $b=(b_1,b_2,\ldots, b_k)$ and note that $\langle a,b\rangle =\sum_{i=1}^{k}a_ib_i$ and $a\circ b=(a_1\circ b_2,a_2\circ b_2,\ldots, a_k\circ b_k)$. Then $a$ and $b$ operator commute if and only if $a_i$ and $b_i$ operator commute in $\V_i$ for each $i$. It follows that $a$ is in the weak center of $\V$ if and only if $a_i$ is in the weak center of $\V_i$ for each $i$. As $\V_i$ is simple, from $(ii)$, the weak center of $\V_i$ is $\R\,e_i$, where $e_i$ is the unit element of $\V_i$. We see that ${\cal C}_w$ is the product of $\R\,e_i$, $i=1,2,\ldots, k$. In particular, when $\V=\R^n$, $\V_i=\R$ with $e_i=1$. Hence, in this setting, the weak center is  $\R^n$.
          
        \end{proof}
\noindent{\bf Remarks.} As pointed out by Michael Orlitzky (in a private communication), Item $(ii)$ in the above theorem can also be seen by mimicking the proof of Lemma VIII.5.1 in \cite{faraut-koranyi}.

\gap

        We have observed before that the composition of two semi-FTvN systems is again a semi-FTvN system. The following example shows that a similar statement fails for FTvN systems, that is, {\it  composition of FTvN systems need not be a FTvN system.}

\gap
        
\begin{example}\label{composition example}
Let $\widetilde{\R^2}$ denote the inner product space consisting on the vector space $\R^2$ with the inner product $\langle x,y\rangle_*:=2\langle x,y\rangle$. (Recall that $\R^2$ carries the usual inner product.) Consider two FTvN systems $(\widetilde{\R^2},\widetilde{\R^2},\lambda)$ and $(\widetilde{\R^2},\R^2,\mu)$, where for any $x=(x_1,x_2)$, we define $\lambda(x):=x^\downarrow$ and $\mu(x):=(x_1+|x_2|,x_1-|x_2|).$ In fact, $(\widetilde{\R^2},\widetilde{\R^2},\lambda)$ is the scaled version (on $\R^2$) of the FTvN system of Example \ref{usual rn} and $(\widetilde{\R^2},\R^2,\mu)$ is the scaled version of the Jordan spin algebra on $\R^2$, see Example 4.14 in \cite{gowda-jeong-ftvn}. Let $\nu:=\mu\circ \lambda$. Clearly, $(\widetilde{\R^2},\R^2,\nu)$ is a semi-FTvN system. We claim that $(\widetilde{\R^2},\R^2,\nu)$ is not a FTvN system. Consider the vectors 
$c=(1,-1)$ and $u=(-1,2)$ so that $\nu(c)=(2,0)$ and $\nu(u)=(3,1)$. Suppose $(\widetilde{\R^2},\R^2,\nu)$ is a FTvN system  so that condition 
      (\ref{ftvn defn}) holds. Then, there is an $x=(x_1,x_2)$ such that $\nu(x)=\nu(u)$ and $\langle c,x\rangle_*=\langle \nu(c),\nu(u)\rangle$. Writing $\lambda(x)=x^\downarrow=(x_1^\downarrow,x_2^\downarrow)$, these translate to $(x_1^\downarrow+|x_2^\downarrow|,x_1^\downarrow-|x_2^\downarrow|)=(3,1)$ and $x_1-x_2=3.$ As $x_1^\downarrow =2$ and $|x_2^\downarrow|=1$, these conditions cannot hold simultaneously. Thus, (\ref{ftvn defn}) fails to hold for $\nu$. 
\end{example}

\section{The semi-FTvN system induced by a complete hyperbolic polynomial}\label{hyperbolic polynomials}
Let $\V$ be a nonzero finite-dimensional real vector space, $0\neq e\in \V$, and $p$ be a real homogeneous polynomial
of degree $n$ on $\V$. We say that {\it $p$ is hyperbolic relative to $e$} if $p(e)\neq 0$ and
for every $x\in \V$,   the roots of the univariate polynomial $t\rightarrow p(te-x)$ are all 
real. Given such a polynomial $p$,  for any $x\in \V$, let $\lambda(x)$  denote the vector
of roots of the polynomial $t\rightarrow p(te-x)$  with entries written in decreasing
order (so, $\lambda(x)\in \R^n$). We shall call $\lambda:\V\rightarrow \R^n$ the {\it eigenvalue map} of $p$ (relative to $e$).\\

Without imposing additional conditions on $p$, we list some known properties:
\begin{itemize}
    \item [(P1)] Proposition \ref{elementary properties of lambda} continues to hold for the eigenvalue map of $p$ (though the proofs are not elementary), see \cite{bauschke et al}. 
\item [(P2)] 
In the canonical setting of $\V=\R^n$ with $p(e)=1$, for any two elements
$x,y\in \R^n$,  there exist real symmetric $n\times n$ matrices $A$ and $B$ such that 
\begin{equation}\label{gurvitz result}
\lambda(tx+sy)=\lambda(tA+sB)\quad (t,s\in \R),
\end{equation}
 where the right-hand side denotes the eigenvalue vector
of a symmetric matrix. This is an observation due to  Gurvits \cite{gurvitz} based on the validity of Lax conjecture (see \cite{lewis et al, helton-vinnikov}). \\One important consequence of (\ref{gurvitz result}) is the validity of Lidskii inequality (from a similar inequality for real symmetric matrices):
\begin{equation}\label{lidskii inequality}\lambda(x)-\lambda(y)\prec \lambda(x-y)\,\,\mbox{for all}\,\,x\\,y\in \V. 
\end{equation}
\item [(P3)] 
The set
$K:=\{x\in \V: \lambda(x)\geq 0 \}$
is a closed convex cone with nonempty interior, called the {\it hyperbolicity cone} corresponding to  $p$ and $e$. For any $d\in K^\circ$ (the interior of $K$), $p$ is hyperbolic relative to $d$ and the corresponding hyperbolicity cone is (still) $K$. In fact, $K^\circ$ is the connected component of the set $\{x\in \V:p(x)\neq 0\}$ that contains $e$. These results are shown in \cite{garding}. Moreover, from \cite{bauschke et al}, Fact. 2.9, we have
\begin{equation}\label{lineality space}
K\cap -K=\{x\in \V: \lambda(x)=0\}.
\end{equation}
Some deeper results on hyperbolicity cones include:
$K$ is facially exposed \cite{renegar hyperbolic program}, and amenable \cite{Lourenco-hyperbolic cone amenable}.
\end{itemize}

\begin{definition} (\cite{bauschke et al}, Definition 2.8)
   Let $p$ be hyperbolic on  $\V$ relative to $e$. Let $\lambda$ be the corresponding eigenvalue map. We say that  $p$ is complete (relative to $e$) if  the following implication holds:
   $$\lambda(x)=0\Rightarrow x=0.$$ 
\end{definition}

\noindent{\bf Remarks.} Let $p$ (hyperbolic on  $\V$ relative to $e$) be complete.  In view of (\ref{lineality space}), this completeness property of $p$ is equivalent to (the corresponding hyperbolicity cone) $K$ being pointed (meaning $K\cap -K=\{0\}$). Since $K$ is the hyperbolicity cone of $p$ relative to any $d\in K^\circ$, we see that the  $p$ is complete relative to any $d$ in $K^\circ.$ 

\gap
A few general ways of constructing complete hyperbolic polynomials are as follows: 
 \begin{itemize}
 \item Suppose $p$ and $q$ are hyperbolic on $\V$ relative to the same $e$. If $q$ is complete, then so is the product $pq$.
     \item Given a finite set of complete hyperbolic polynomials over $\V$ (all hyperbolic with respect to the same $e$), their product is again complete and hyperbolic. 
\item Suppose $p$ is a complete hyperbolic polynomial of degree $n$ over $\V$ with respect to $e\in \V$; let $\lambda:\V\rightarrow \R^n$ denote the corresponding eigenvalue map.   Let $q$ be a permutation invariant complete hyperbolic polynomial of degree $m$ on $\R^n$ with respect to $d=(1,1,\ldots,1)\in \R^m$; let $\mu$ denote the corresponding eigenvalue map. Then, the composition $q\circ \lambda$ is a complete hyperbolic polynomial of degree $m$ on $\V$ with respect to $e$ whose eigenvalue map is $\mu\circ \lambda$. (That $q\circ \lambda$ is hyperbolic with eigenvalue map $\mu\circ \lambda$ is proved in \cite{bauschke et al}, Theorem 3.1.) 
As $(\mu\circ \lambda)(x)=0\Rightarrow \mu(\lambda(x))=0\Rightarrow \lambda(x)=0\Rightarrow x=0$, we see the completeness of $q\circ \lambda$.
\end{itemize}

\begin{theorem}\label{construction}
Suppose $p$ is a complete hyperbolic polynomial on $\V$ relative to $e$. Let $\lambda$ be the corresponding eigenvalue map. Define
\begin{equation}\label{polarization identity}
    \langle x,y\rangle:= \frac{1}{4}\Big \{
||\lambda(x+y)||^2-||\lambda(x-y)||^2\Big \}\quad (x,y\in \V)
\end{equation}
(where the right-hand side is computed in $\R^n$ equipped with the usual norm). Then, relative to the above, $\V$ becomes a (real, finite-dimensional) inner product space and the triple $(\V,\R^n,\lambda)$ becomes a semi-FTvN system. Moreover, the hyperbolicity cone $\{x\in \V:\lambda(x)\geq 0\}$ is a proper cone.
\end{theorem}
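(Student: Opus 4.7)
The plan is to establish, in order, that (i) $\langle\cdot,\cdot\rangle$ defines a real inner product on $\V$, (ii) the two axioms of a semi-FTvN system hold for $(\V,\R^{n},\lambda)$, and (iii) the hyperbolicity cone is proper. The principal tools will be the Gurvits/Lax representation and the Lidskii inequality recorded in (P2), together with the completeness hypothesis and (P3).

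For (i), the idea is to apply Gurvits's identity: for any fixed pair $x,y\in\V$ there exist symmetric matrices $A,B\in\Sn$ with $\lambda(sx+ty)=\lambda(sA+tB)$ for all $s,t\in\R$. Writing $q(z):=\|\lambda(z)\|^{2}$, this yields
\[
q(sx+ty)=\tr\bigl((sA+tB)^{2}\bigr)=s^{2}\tr(A^{2})+2st\,\tr(AB)+t^{2}\tr(B^{2}),
\]
a genuine quadratic form in $(s,t)$. From this one reads off the parallelogram identity $q(x+y)+q(x-y)=2q(x)+2q(y)$ and the homogeneity $q(tx)=t^{2}q(x)$ for every $t\in\R$. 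Since $\lambda$ is continuous (the coefficients of $t\mapsto p(te-x)$ are polynomial in $x$, and the roots of a real-rooted polynomial depend continuously on its coefficients), the classical Jordan--von Neumann polarisation argument then shows that $\langle x,y\rangle=\tfrac14(q(x+y)-q(x-y))$ is a symmetric bilinear form on $\V$; completeness forces $q(x)=0\Rightarrow x=0$, so this form is positive-definite and $(\V,\langle\cdot,\cdot\rangle)$ is a finite-dimensional real inner product space.

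For (ii), axiom (A1) is built into the construction, since $\|x\|^{2}=\langle x,x\rangle=q(x)=\|\lambda(x)\|^{2}$. For (A2), we invoke Lidskii's inequality $\lambda(x)-\lambda(y)\prec\lambda(x-y)$: the squared Euclidean norm is Schur-convex, so $\|\lambda(x)-\lambda(y)\|^{2}\le\|\lambda(x-y)\|^{2}=\|x-y\|^{2}$, and expanding both sides and cancelling $\|x\|^{2}$ and $\|y\|^{2}$ by (A1) produces $\langle x,y\rangle\le\langle\lambda(x),\lambda(y)\rangle$. For (iii), (P3) already records that $K=\{x:\lambda(x)\ge 0\}$ is closed, convex, and has nonempty interior; combining (\ref{lineality space}) with completeness yields $K\cap(-K)=\{x:\lambda(x)=0\}=\{0\}$, so $K$ is pointed, hence proper.

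The main obstacle will be bilinearity of $\langle\cdot,\cdot\rangle$. A naive extension of Gurvits to three vectors $x,y,z$---seeking symmetric $A,B,C$ with $\lambda(rx+sy+tz)=\lambda(rA+sB+tC)$ for all scalars---is not available, since no such multivariate Lax-type representation holds beyond two generators. Routing the argument through the parallelogram identity (which uses Gurvits only on \emph{pairs}) and Jordan--von Neumann bypasses this, with continuity of $\lambda$ serving as the analytic bridge from $\Q$-linearity to $\R$-linearity in the polarisation step.
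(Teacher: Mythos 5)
Your proof is correct, but the paper does not give a self-contained argument: it simply cites Bauschke et al.\ (Theorem 4.2 and Proposition 4.4) for the facts that the polarization formula defines an inner product and that $\lambda$ is norm-preserving and inner-product-expanding; only the properness of $K$ is argued in-text, and your argument there matches the paper's. The interesting comparison is therefore between your route and Bauschke et al.'s. Their paper predates the resolution of the Lax conjecture, so they cannot (and do not) use the Gurvits representation (P2). Instead one can observe directly that $q(z):=\|\lambda(z)\|^{2}$ is a homogeneous quadratic \emph{polynomial} on $\V$: writing $e_{k}$ for the $k$th elementary symmetric function, $q(z)=e_{1}(\lambda(z))^{2}-2e_{2}(\lambda(z))$, and $e_{k}(\lambda(z))$ is, up to sign, the degree-$k$ coefficient of $t\mapsto p(te-z)$, hence a homogeneous polynomial of degree $k$ in $z$. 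That $q$ is literally a quadratic form makes polarization immediate --- no Jordan--von~Neumann or continuity argument needed --- and the inner-product-expanding property is derived likewise by elementary hyperbolic-polynomial estimates rather than by passing through Lidskii and Schur-convexity. Your approach transfers everything to symmetric matrices via Helton--Vinnikov, which is conceptually transparent but calls on a far deeper theorem than the elementary symmetric-function identity that suffices. One point to tighten: the paper records (P2) only ``in the canonical setting of $\V=\R^{n}$ with $p(e)=1$,'' whereas you need it for arbitrary finite-dimensional $\V$ and arbitrary pairs $x,y$; the reduction (restrict $p$ to $\spn\{e,x,y\}$, normalize $p(e)$, then apply Lax to the at-most-three-variable restriction) is standard but should be spelled out rather than applying (P2) verbatim.
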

\begin{proof}
That $\langle x,y\rangle$ defines  an inner product on $\V$,
$\lambda:\V\rightarrow \R^n$ is norm-preserving, and 
$\langle x,y\rangle\leq \langle\lambda(x),\lambda(y)\rangle$ for all $x,y\in \V$, are shown in   \cite{bauschke et al}, Theorem 4.2 and Proposition 4.4.
    Additionally, the pointedness of the hyperbolicity cone comes from  (\ref{lineality space}). As $K$ is a pointed closed convex cone with nonempty interior, by definition, it is a proper cone.
    \end{proof}

\noindent{\bf Remarks.} Suppose $p$ is a complete hyperbolic polynomial on $\V$ and consider the induced semi-FTvN system $(\V,\R^n,\lambda)$ defined above. Then, following
\cite{bauschke et al}, Definition 5.1, we say that $p$ is
{\it isometric} if for all $y,z\in \V$, there exists $x\in \V$ such that $$\lambda(x)=\lambda(z)\quad \mbox{and}\quad \lambda(x+y)=\lambda(x)+\lambda(y).$$ Then,  Proposition 5.3 in \cite{bauschke et al} shows that  when $p$ is complete and isometric, $(\V,\R^n\lambda)$ satisfies (\ref{ftvn defn}), hence a FTvN system. In summary,
\\
{\it Every complete hyperbolic polynomial induces a semi-FTvN system and every complete isometric hyperbolic polynomial induces an FTvN system.} 

\gap

We list below a few examples:

\gap

\begin{example}
On $\R^n$ with $x=(x_1,x_2,\ldots, x_n)$,
 $p(x):=x_1x_2\cdots x_n$ is hyperbolic with respect to $e=(1,1,\ldots,1)$. Here $\lambda(x)=x^\downarrow$. This $p$ is complete. The semi-FTvN system reduces to the one given in  Example \ref{usual rn}.
 \end{example}
 
  \begin{example}  
On $\R^n$, $n>1$,  $q(x):=x_1^2-(x_2^2+x_3^2+\cdots+x_n^2)$ is hyperbolic with respect to $e=(1,0,\ldots, 0).$ This $q$ is complete. The induced semi-FTvN corresponds to the Jordan spin algebra ${\cal L}^n$ that appears in the study of Euclidean Jordan algebras.
\end{example}

\begin{example}
Let $\V$ be an Euclidean Jordan algebra of rank $n$ and unit element $e$. Then, $p(x):=\det(x)$ is hyperbolic with respect to $e$. In this case, the inner product defined in (\ref{polarization identity}) reduces to the trace inner product.
\end{example}

\gap

For the rest of this section, we assume that {\it $p$ is a complete hyperbolic polynomial on $\V$ relative to $e$ with eigenvalue map $\lambda$, $K$ is the corresponding hyperbolicity cone, and $(\V,\R^n,\lambda)$ is the induced semi-FTvN system.} 

\begin{definition}
A linear transformation $A:\V\rightarrow \V$ is said to be an automorphism of $K$ if $A(K)=K$.
\end{definition}
Since $K$ has nonempty interior, an automorphism of $K$ is necessarily invertible. We denote the set of all automorphisms of $K$ by $\mathrm{Aut(K)}$ and write $\mathrm{Aut(\V)}$ as an abbreviation for $\mathrm{Aut(\V,\R^n,\lambda)}$.

\gap

We now record some properties in the semi-FTvN system $(\V,\R^n,\lambda)$ induced by $p$ and $e$.
\begin{itemize}
\item $\lambda(e)=\one$ (the vector of $1$s in $\R^n$) and $\lambda(x+re)=\lambda(x)+r\one$ for all $r\in \R$.
\item $\lambda(x)=\one\Rightarrow x=e$ (by the completeness of $p$).
\item $A\in \mathrm{Aut(\V)}\Rightarrow A\in \mathrm{Aut(K)}$ and $Ae=e$. 
\item $\langle x,e\rangle =\langle \lambda(x),\lambda(e)\rangle$\,\, \mbox{for all} \,\,$x\in \V$. So, $e$ strongly commutes with every $x$ in $\V$.
\item ${\cal C}=\R\,e$. As in \cite{gowda-jeong-ftvn}, Example 4.8, this can be seen as follows: From above, $\R\,e\subseteq {\cal C}$. If $x\in {\cal C}$, then $x$ commutes with $-x$; hence, by Proposition \ref{commutativity equivalence},
$\lambda(x)+\lambda(-x)=\lambda(x+(-x))=\lambda(0)=0.$ Thus, $\lambda(x)=-\lambda(-x)$. Since the entries of $\lambda(x)$ are decreasing and those of $-\lambda(-x)$ are increasing, $\lambda(x)$ must be a multiple of $\one.$ By completeness of $p$, $x$ is a multiple of $e$. Hence, ${\cal C}=\R\,e$.
\item $D+D^T=0$ for every $D\in \mathrm{Lie(\Aut(\V))}$.
\item $De=0$ for every $D\in \mathrm{Lie(\Aut(\V))}$.
(We recall that $e$ strongly commutes, hence, commutes with every $x\in \V$.)
\end{itemize}

\gap

 One way of interpreting condition (\ref{ftvn defn}) is: For every $c,u\in \V$, there is an $x$ in the $\lambda$-orbit of $u$ that commutes with $c$. Based on the commutative property (see e.g., Item $(d)$ in Proposition \ref{commutativity equivalence}), it was shown in \cite{gowda-ftvn}, Corollary 2.8, that when $(\V,\W,\lambda)$ is an FTvN system, $\lambda(V)$ is a convex cone in $\W$ (additionally closed when $\V$ is finite-dimensional). The following example shows that such a property may not hold when $(\V,\W,\lambda)$ is merely a semi-FTvN system.

\begin{example} \label{subspace of a FTvN} This example is taken from \cite{bauschke et al}. Consider the FTvN system $(\R ^3,\R ^3,\lambda)$ with $\lx=x^\downarrow$. This corresponds to the complete hyperbolic polynomial $p(x)=x_1x_2x_3$, where $x=(x_1,x_2,x_3)$. Let $\mathcal{U}$ be the span of vectors  $(1, 1, 1)$ and $(3, 1,0)$. 
For any $x=\alpha (1, 1, 1) + \beta (3, 1, 0)$ in $\mathcal{U}$ with $\alpha,\beta\in \R$, we see that $\lambda(x)=x$ if $\beta \geq 0$ and $\lx= \alpha (1, 1, 1) + \beta (0, 1, 3)$ if $\beta<0$.
As $(\R^3,\R^3,\lambda)$ is a FTvN system, $(\U,\R^3,\lambda)$ is a semi-FTvn system. It has been observed in \cite{bauschke et al}, Example 5.2 that the range of $\lambda$ is not convex, hence $p$ (restricted to $\U$) is not isometric. 
Additionally, condition (\ref{ftvn defn}) fails to hold, e.g., with  $c=(3,1,0)$, $u=-(3,1,0)$, and $\lu=(0,-1,-3)$. 
In conclusion, $(\U,\R^3,\lambda)$ is a {\it semi-FTvN system in which  $\lambda(\U)$ is not convex}.
This example also shows that {\it a subspace of an FTvN system need not be a FTvN system.}
\end{example}
\gap

\noindent{\bf Remarks.} In a FTvN system $(\V,\W,\lambda)$, the {\it sublinearity relation}
$$\langle \lambda(c),\lambda(x+y)\rangle \leq \langle\lambda(c),\lambda(x)\rangle +\langle \lambda(c),\lambda(y)\rangle\quad (c,x,y\in \V)$$
holds. Since such a relation holds for hyperbolic polynomials (\cite{bauschke et al}, Corollary 3.5), it continues to hold in a semi-FTvN system that comes from a complete hyperbolic polynomial. Whether such a relation holds in a general semi-FTvN system is unclear.

\gap

We end this section with a characterization of strong commutativity in the semi-FTvN system induced by a complete hyperbolic polynomial.

\begin{theorem}
Consider the semi-FTvN system induced by a complete hyperbolic polynomial. Then, $a$ and $b$ strongly commute in this system if and only if 
$$[\lambda(a)-\lambda(b)]^\downarrow =\lambda(a-b).$$
\end{theorem}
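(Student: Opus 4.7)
The plan is to reduce strong commutativity to an equality of norms and then invoke the Lidskii-type majorization inequality (P2) together with the strict convexity of $t\mapsto t^2$.

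First, I would rewrite the hypothesis of strong commutativity using Proposition~\ref{commutativity equivalence}(b): $a$ and $b$ strongly commute if and only if $\|\lambda(a)-\lambda(b)\|=\|a-b\|$. Combining this with the norm-preserving property (A1) applied to $a-b$, namely $\|a-b\|=\|\lambda(a-b)\|$, strong commutativity is equivalent to
\[
\|\lambda(a)-\lambda(b)\|=\|\lambda(a-b)\|.
\]
Both sides are norms of vectors in $\R^n$, so the problem becomes a question about majorization in $\R^n$.

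Next I would invoke the Lidskii inequality stated in (P2): $\lambda(a)-\lambda(b)\prec \lambda(a-b)$. A standard fact about majorization says that for any convex $f:\R\to\R$ and any $u,v\in\R^n$ with $u\prec v$, one has $\sum f(u_i)\le \sum f(v_i)$, and if $f$ is strictly convex equality forces $u$ to be a permutation of $v$. Applying this to $f(t)=t^2$ with $u=\lambda(a)-\lambda(b)$ and $v=\lambda(a-b)$, the equality $\|\lambda(a)-\lambda(b)\|=\|\lambda(a-b)\|$ is equivalent to $\lambda(a)-\lambda(b)$ being a permutation of $\lambda(a-b)$. Since $\lambda(a-b)$ is already written in decreasing order, this last condition is exactly $[\lambda(a)-\lambda(b)]^\downarrow = \lambda(a-b)$.

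Putting the three equivalences together proves the theorem in both directions. The main potential obstacle is the equality case in the majorization/Schur-convexity argument; I would either cite a standard reference or, to keep the argument self-contained, argue directly: if $u\prec v$ and $\|u\|=\|v\|$, write $u=Dv$ for a doubly stochastic $D$ (Birkhoff), note that $\|Dv\|\le\|v\|$ with equality forcing $D$ to permute the support of $v^\downarrow$, and conclude $u^\downarrow=v^\downarrow$. Everything else (Proposition~\ref{commutativity equivalence}, property (A1), and the Lidskii inequality (P2)) is already available in the paper.
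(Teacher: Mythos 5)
Your argument is correct and follows essentially the same route as the paper's: reduce strong commutativity to the norm identity $\|\lambda(a)-\lambda(b)\|=\|\lambda(a-b)\|$ via Proposition~\ref{commutativity equivalence}(b) and (A1), then combine the Lidskii majorization (P2) with a Birkhoff-type equality-case argument to conclude that $\lambda(a)-\lambda(b)$ is a permutation of $\lambda(a-b)$. The ``direct'' Birkhoff version you sketch at the end is precisely what the paper does: it writes the doubly stochastic matrix from Hardy--Littlewood--P\'olya as a convex combination of permutation matrices and invokes strict convexity of the Euclidean norm to extract a single permutation.
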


\begin{proof}
Suppose $[\lambda(a)-\lambda(b)]^\downarrow =\lambda(a-b).$ Then, $||\lambda(a)-\lambda(b)||=||\lambda(a-b)||=||a-b||.$ By Proposition \ref{commutativity equivalence}, $a$ and $b$ strongly commute.\\
Now suppose $a$ and $b$ strongly commute so that (by Proposition \ref{commutativity equivalence}), 
$||\lambda(a)-\lambda(b)||=||a-b||.$ By (\ref{lidskii inequality}) and the Hardy-Littlewood-Polya theorem (\cite{bhatia}, Theorem II.1.10), there exists a doubly stochastic matrix $A$ such that 
$$\lambda(a)-\lambda(b)=A\lambda(a-b).$$
Moreover, by Birkhoff's theorem (\cite{bhatia}, Theorem II.2.3), $A$ could be written as a convex combination of permutation matrices. Thus,
$$\lambda(a)-\lambda(b)=\Big (\sum_{k=1}^{N}t_kP_k\Big )\lambda(a-b),$$
where $t_k$s are positive with sum $1$ and $P_k$s are permutation matrices. We claim that $N=1$.
Now, $$||a-b||=||\lambda(a)-\lambda(b)||=||\Big (\sum_{k=1}^{N}t_kP_k\Big )\lambda(a-b)||\leq$$
$$ \sum_{k=1}^{N}t_k||P_k\big (\lambda(a)-\lambda(b)\big )||=
\sum_{k=1}^{N}t_k||\lambda(a)-\lambda(b))||=\sum_{k=1}^{N}t_k||a-b||=||a-b||.$$
By the strict convexity of the (usual) norm, $N=1$. This shows that $\lambda(a)-\lambda(b)=P_1\big (\lambda(a-b)\big )$, hence, $[\lambda(a)-\lambda(b)]^\downarrow =\lambda(a-b).$ This completes the proof.
\end{proof}

\noindent{\bf Remarks.} Consider a Euclidean Jordan algebra which corresponds to a complete hyperbolic polynomial. In this setting, the above result says that $[\lambda(a)-\lambda(b)]^\downarrow =\lambda(a-b)$ if and only if $a$ and $b$ strongly operator commute. When specialized to the algebra of complex Hermitian matrices, this statement yields a result of Massey et al \cite{massey et al.}. 

\section{Commutation principles}

In this section, we describe some commutation principles where the focus is on commutativity instead of strong commutativity. Our goal is to extend the following commutation principle of Ram\'irez et al., from Euclidean Jordan algebras to group/semi-FTvN settings. 

\begin{theorem} (Ram\'irez, Seeger, and Sossa \cite{ramirez et al}) Let $\V$ be a Euclidean Jordan algebra, $E$ be a spectral set in $\V$, and
$F:\V\rightarrow \R$ be a spectral function. Let $\Theta:\V\rightarrow \R$ be Fr\'{e}chet differentiable. If $a$ is a
local minimizer/maximizer of the map
$$ x\in E \rightarrow  \Theta(x)+F(x),$$
then $a$ and $\Theta^{\prime}(a)$ operator commute.
\end{theorem}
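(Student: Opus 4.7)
The plan is to deduce this commutation principle as a direct consequence of Theorem \ref{theorem in introduction} combined with Theorem \ref{ commutativity equals operator commutativity}. The strategy is to recognize that, in the Euclidean Jordan algebra setting, spectral sets and spectral functions are precisely the sets and functions invariant under the algebra automorphism group $\Aut(\V)$, so the Jordan-algebraic hypotheses translate directly into the group-theoretic hypotheses of Theorem \ref{theorem in introduction}.

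First, I would take $\G := \Aut(\V)$, viewed as a closed subgroup of $\mathrm{GL}(\V)$. A spectral set has the form $E = \lambda^{-1}(Q)$ for some $Q \subseteq \R^n$, and a spectral function has the form $F = \phi \circ \lambda$ for some $\phi : \R^n \to \R$. Using the identification $\Aut(\V,\R^n,\lambda) = \Aut(\V)$ recorded in Section 4, every $A \in \G$ satisfies $\lambda(Ax) = \lambda(x)$, hence $A(E) \subseteq E$ and $F(Ax) = F(x)$ for all $x \in \V$. Thus $E$ is $\G$-invariant and $F$ is $\G$-invariant in the sense required by Theorem \ref{theorem in introduction}.

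Next, applying that theorem at the local optimizer $a$ yields that $a$ commutes with $\Theta'(a)$ relative to $\G = \Aut(\V)$. I would then invoke Theorem \ref{ commutativity equals operator commutativity}, which identifies commutativity relative to $\Aut(\V)$ with operator commutativity in the Euclidean Jordan algebra $\V$. Composing these two facts delivers the desired conclusion that $a$ and $\Theta'(a)$ operator commute.

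The main obstacle I anticipate is not in this reduction itself, which is essentially a bookkeeping exercise, but rather in the upstream Theorem \ref{theorem in introduction}. The proof of that theorem should be a first-order analysis at $a$: for every $D \in \Lie(\G)$, the curve $t \mapsto \exp(tD)a$ stays in $E$ by $\G$-invariance of $E$ and keeps $F$ constant by $\G$-invariance of $F$, so the local optimality of $a$ for $\Theta + F$ forces the derivative of $t \mapsto \Theta(\exp(tD)a)$ at $t = 0$ to vanish, giving $\langle Da, \Theta'(a)\rangle = 0$. Once this is in hand, the Jordan-algebraic conclusion follows at once from Theorem \ref{ commutativity equals operator commutativity}.
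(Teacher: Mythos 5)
Your proposal is correct and follows essentially the same route the paper takes: the paper recovers the Ram\'irez--Seeger--Sossa theorem as the special case of the group commutation principle (Theorem \ref{group commutation principle}, whose proof is exactly the first-order variational argument you outline) applied with $\G=\Aut(\V,\R^n,\lambda)=\Aut(\V)$, combined with Theorem \ref{ commutativity equals operator commutativity} to translate commutativity relative to $\Aut(\V)$ into operator commutativity. Your observation that spectral sets and spectral functions are automatically $\Aut(\V)$-invariant (since $\lambda\circ A=\lambda$) is precisely the reduction step the paper uses.
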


In \cite{gowda-jeong-siopt}, Theorem 1.2, Gowda and Jeong extended the above result by assuming that $E$ and $F$  are weakly spectral, see below for the definitions. Their short proof (see also the proof of Theorem 2.1 in \cite{seeger-orbital}) made use of the Lie algebra of the automorphism group of $\V$. Now, we extend this further to group settings.  
\\

In the following, we will formally describe Theorem \ref{theorem in introduction} and provide its proof. We rely on the notation used in the Introduction. We recall that for any map $h:\V\rightarrow \V$ and $E\subseteq \V$, the  {\it variational inequality problem} $\mathrm{VI}(h,E)$ is  to
\begin{center}
find $x^*\in E$ such that $\langle h(x^*),x-x^*\rangle \geq 0$ for all $x\in E$
\end{center}
and for a nonempty set  $S$ in $\V$, the {\it normal cone of $S$} is given by
$$N_{S}(a):=\{d\in \V:\langle d,x-a\rangle\leq 0,\,\,\forall\,\,x\in S\}.$$

\begin{theorem}\label{group commutation principle}
     Let $\V$ be a real inner product space, ${\cal G}$ be a closed subgroup of the group of invertible bounded linear transformations on $\V$. Let  
     \begin{itemize}
         \item $E$ be set in $\V$ that is ${\cal G}$-invariant (so,  $A(E)\subseteq E$ for all $A\in {\cal G}$),
         \item Let
$F:\V\rightarrow \R$ be a real-valued function that is ${\cal G}$-invariant (so, $F(Ax)=F(x)$ for all $A\in {\cal G}$ and $x\in\V$),  
\item $\Theta:\V\rightarrow \R$ be Fr\'{e}chet differentiable, 
\item $h:\V\rightarrow \V$, 
\item $a\in E$ and $c\in \V$.
\end{itemize}
Then the following statements hold:
\begin{itemize}
    \item [$(i)$] If $a$ is a
local minimizer/maximizer of the map
$$ x\in E \rightarrow  \Theta(x)+F(x),$$
then $a$  commutes with $\Theta^{\prime}(a)$ relative to ${\cal G}$.  
    \item  [(ii)] If $a$ is an optimizer of $$\underset{x\in E}{max/min}\quad \langle c,x\rangle,$$ then $a$  commutes with $c$ relative to ${\cal G}$;
\item  [(iii)] If $a$ is a solution of the of the variational inequality problem $\mathrm{VI}(h,E)$,
then $a$  commutes with $h(a)$ relative to ${\cal G}$.
\item [$(iv)$] Relatively to ${\cal G}$, $a$ commutes with every element in the normal cone of $E$. 
\end{itemize}
\end{theorem}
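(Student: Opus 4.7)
The plan is to prove all four statements through one unified construction: for any $D \in \Lie(\G)$, consider the smooth curve $\gamma(t) := \exp(tD)\,a$. Since $\exp(tD) \in \G$ for all $t \in \R$ and $E$ is $\G$-invariant, this curve lies entirely in $E$; it satisfies $\gamma(0) = a$ with $\gamma^\prime(0) = Da$. Each part then reduces to a one-variable first-order analysis at $t = 0$, and no part should require more than this.

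For $(i)$, the $\G$-invariance of $F$ makes $F$ constant along $\gamma$, so
\[
\phi(t) := \Theta(\gamma(t)) + F(\gamma(t)) = \Theta(\gamma(t)) + F(a).
\]
Since $a$ is a local optimizer over $E$ and $\gamma(t) \in E$ for every $t$, the real-valued function $\phi$ has a local optimum at $t = 0$. The chain rule together with the Fr\'{e}chet differentiability of $\Theta$ gives $\phi^\prime(0) = \langle \Theta^\prime(a), Da \rangle = 0$. As $D$ was arbitrary in $\Lie(\G)$, $a$ commutes with $\Theta^\prime(a)$ relative to $\G$. Statement $(ii)$ is then the special case of $(i)$ obtained by taking $\Theta(x) := \langle c, x \rangle$ (so $\Theta^\prime(a) = c$) and $F \equiv 0$, which is trivially $\G$-invariant.

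For $(iii)$, plugging $x := \gamma(t) \in E$ into the variational inequality satisfied by $a$ yields
\[
\psi(t) := \langle h(a), \gamma(t) - a \rangle \geq 0 \qquad (t \in \R),
\]
with $\psi(0) = 0$. Thus $t = 0$ is a global minimum of the differentiable function $\psi$, forcing $\psi^\prime(0) = \langle h(a), Da \rangle = 0$ for every $D \in \Lie(\G)$. Statement $(iv)$ follows immediately: if $d \in N_E(a)$, then $a$ solves the variational inequality $\mathrm{VI}(-d, E)$, viewing $-d$ as a constant map from $\V$ to $\V$. Applying $(iii)$ yields commutativity of $a$ with $-d$, which by linearity of the defining condition $\langle Da, \cdot \rangle = 0$ is the same as commutativity with $d$.

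I anticipate no real obstacle: the construction does all the work. The one technical step worth flagging is the differentiability of $t \mapsto \exp(tD)\,a$ at $t = 0$ with derivative $Da$, which follows from the uniform convergence on bounded sets of the series defining $\exp(tD)$. The structural content of the theorem is that the $\G$-invariance of $E$, together with the $\G$-invariance of $F$ in $(i)$, supplies a one-parameter family of admissible perturbations $\exp(tD)\,a \in E$ along every Lie-algebra direction, and the optimality/VI/normal-cone condition forces the first-order rate in each such direction to vanish.
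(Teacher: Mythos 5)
Your proposal is correct and follows essentially the same argument as the paper: perturb $a$ along the one-parameter family $t\mapsto\exp(tD)a\in E$ for $D\in\Lie(\G)$ and read off the vanishing first-order condition. The only organizational differences are cosmetic — the paper derives $(iii)$ and $(iv)$ by reduction to $(ii)$ with $c:=h(a)$ (resp. $c$ in the normal cone), whereas you prove $(iii)$ directly and reduce $(iv)$ to $(iii)$ — but the underlying first-order computation is identical.
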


 \begin{proof}
$(i)$ First, consider the case when $a$ is a local
minimizer of $\Theta+F$ over $E$. 
Then we have
$$\Theta(a)+F(a)\leq \Theta(x)+F(x)\,\,\mbox{for all}\,\,x\in B_a\cap E,$$
where $B_a$ denotes some open ball around $a$. Let $D\in \mathrm{Lie({\cal G})}$ so that for all  $t\in \R$, $\exp(tD)\in \mathrm{{\cal G}}$. As $E$ and $F$ are ${\cal G}$-invariant, we have 
$\exp(tD)a\in E$ and $F(\exp(tD)a)=F(a)$ for all $t\in \R$. Moreover, by the continuity of $\exp(tD)$ in $t$, $\exp(tD)a\in B_a\cap E$ for all $t$ close to zero.
For such $t$,
$$\Theta(a)+F(a)\leq \Theta\big (\exp(tD)a\big)+F\big(\exp(tD)a\big)=\Theta\big(\exp(tD)a\big)+F(a)$$
and so $\Theta(a)\leq \Theta(\exp(tD)a)$. Then, the  derivative of $\phi(t):=\Theta(\exp(tD)a)$ at $t=0$ is zero. Since this derivative is $\langle \Theta^{\prime}(a), Da\rangle$, we have $\langle Da,\Theta^\prime(a)\rangle=0$. As $D\in \mathrm{Lie({\cal G})}$ is arbitrary, $a$  commutes with $\Theta^{\prime}(a)$ relative to ${\cal G}$.
\\
Now we consider the (local) maximum case. By considering $-(\Theta+F)$, we see that $a$  commutes with  $-\Theta^\prime(a)$ relative to ${\cal G}$. Hence, $a$  commutes with   $\Theta^\prime(a)$ relative to ${\cal G}$.\\
$(ii)$
We specialize $(i)$ by taking $F=0$ and $\Theta(x):=\langle c,x\rangle$. Then $a$  commutes with $c$ relative to ${\cal G}$. \\
$(iii)$ If  $\langle h(a),x-a\rangle \geq 0$ for all $x\in E$, then $a$ solves the optimization problem $\underset{x\in E}{\min} \,\langle c,x\rangle$, where $c:=h(a)$. By $(ii)$, $a$ commutes with  $h(a)$ relative to ${\cal G}$. This completes the proof.\\
$(iv)$ if $c$ is in the normal cone of $E$ at $a$, then $\langle c,x-a\rangle \leq 0$ for all $x\in E$. By $(ii)$, 
$a$ commutes with $c$ relative to ${\cal G}$.
\end{proof}

\smallgap
\begin{corollary} \label{convex corollary}
 Let the assumptions be as in the above theorem. Suppose further that 
 \begin{itemize}
     \item $\V$ is finite-dimensional, 
     \item $E$ is convex, 
     \item 
     $f:\V\rightarrow \R$ is a convex function and 
 $f(a)=\underset{x\in E}{\min}\, f$.
 \end{itemize}
 Then,  relative to ${\cal G}$, $a$  commutes with some element in the subdifferential of $f$ at $a$.
     \end{corollary}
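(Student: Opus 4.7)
The plan is to derive this from part $(iv)$ of Theorem \ref{group commutation principle} via the standard first-order optimality conditions for convex programming. Since $a$ minimizes $f$ over $E$, it also minimizes $f + \iota_E$ over all of $\V$, where $\iota_E$ denotes the indicator function of the convex set $E$. By Fermat's rule this is equivalent to $0 \in \partial(f + \iota_E)(a)$.

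Next I would invoke the Moreau--Rockafellar subdifferential sum rule. Since $f:\V \rightarrow \R$ is finite and convex on the finite-dimensional space $\V$, it is continuous everywhere, so the qualification condition is automatic and
$$\partial(f + \iota_E)(a) = \partial f(a) + \partial \iota_E(a) = \partial f(a) + N_E(a).$$
Combining this with the Fermat rule, there exist $g \in \partial f(a)$ and $d \in N_E(a)$ with $g + d = 0$; equivalently, $-g \in N_E(a)$.

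Finally, Theorem \ref{group commutation principle}$(iv)$ asserts that $a$ commutes (relative to ${\cal G}$) with every element of the normal cone $N_E(a)$. Applying this to $-g$ gives $\langle Da, -g \rangle = 0$ for all $D \in \mathrm{Lie}({\cal G})$, and since the pairing is linear in the second argument this yields $\langle Da, g\rangle = 0$ for all such $D$. Hence $a$ commutes with $g$ relative to ${\cal G}$, and $g \in \partial f(a)$ is the required subgradient.

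I do not anticipate any serious obstacle: the argument is essentially the combination of a classical convex-analytic optimality condition with a direct appeal to the previous theorem. The only point requiring minor care is the validity of the sum rule, which is automatic here since $f$ is everywhere finite-valued (and hence continuous) on the finite-dimensional space $\V$.
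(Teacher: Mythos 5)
Your proof is correct and takes essentially the same route as the paper: both reduce the constrained minimization to an unconstrained one via the indicator function, apply the subdifferential sum rule to produce a subgradient $g$ with $-g$ in the normal cone $N_E(a)$, and then invoke Theorem \ref{group commutation principle}$(iv)$. Your remark about the qualification condition being automatic (since $f$ is finite, hence continuous, on the finite-dimensional $\V$) is a small but useful addition the paper leaves implicit.
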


     \begin{proof} 
     
     Let $g$ denote the indicator function of $E$ (so it takes the
value zero on $E$ and infinity outside of $E$). Then, $a$ is an optimizer of the (global)
convex problem $\min\,(f+g)$ and so
$$0\in \partial(f+g)=\partial f(a)+\partial g(a),$$
where the equality comes from the subdifferential sum formula \cite[Theorem
23.8]{Rock}. Hence, there is a $c\in \partial\,f(a)$ such that $-c\in \partial g(a)$.  This $c$ will have the
property that
$\langle -c,x-a\rangle\leq 0$ for all $x\in E$,
that is, $-c$ belongs to the normal cone of $E$ at $a$. By the above theorem, $a$ commutes with $-c$ (equivalently, $c$) relative to ${\cal G}$.
\end{proof}

\smallgap

\begin{example} Consider a Euclidean Jordan algebra $\V$ with its symmetric cone $K$. Let $\Aut(K)$ denote the automorphism group of $K$, see Example 2.5. In view of Example 2.6, we know how to describe commutativity relative to $\Aut(K)$. Now suppose $f$ is a convex function on $\V$ and $a\in K$ with $f(a)=\underset{K}{\min}\,f$. As $K$ is invariant under the group $\Aut(K)$, the above corollary shows that, relative to this group, $a$ commutes with some element $c$ in subdifferential of $f$ at $a$.  Thanks to  Example 2.6, we infer that $a$ and $c$ operator commute and $a\circ c=0$.  Thus, along with the operator commutativity  - which can be perceived as a condition coming from the automorphism group $\Aut(\V)$ - we now have an additional optimality condition, namely, $a\circ c=0$.   
 In particular, when $f$ is differentiable, $a$ operator commutes with $f^{\prime}(a)$ and $a\circ f^{\prime}(a)=0$.
\end{example}

\smallgap

The following result is motivated by similar results in the setting of Euclidean Jordan algebras \cite{gowda-jeong-siopt, jeong-sossa-arxiv2024}.

\begin{theorem}
    Let $\V$, ${\cal G}$, $E$, and $F$ be as in the above theorem. Let $\Phi:E\rightarrow \R$ be a real-valued function. 
     If $a$ is a
local maximizer of the map
$$ x\in E \mapsto \Phi(x)+F(x),$$ then, relative to ${\cal G}$,
$a$ commutes with every element in the subdifferential $\partial_{E}\,\Phi(a)$, 
where
\begin{equation}\label{subdifferential condition}
\partial_{E}\,\Phi(a):=\{d\in \V: \Phi(x)-\Phi(a)\geq \langle d,x-a\rangle\,\,\forall\,\,x\in E \}
\end{equation} is assumed to be nonempty.
\end{theorem}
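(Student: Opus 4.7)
My plan is to mimic the argument of Theorem 6.2(i), replacing the role of Fr\'echet differentiability of $\Theta$ with the variational inequality that defines membership in $\partial_{E}\,\Phi(a)$.

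Fix $d\in \partial_{E}\,\Phi(a)$ and an arbitrary $D\in \mathrm{Lie({\cal G})}$. Set $\gamma(t):=\exp(tD)a$, a smooth curve with $\gamma(0)=a$ and $\gamma^\prime(0)=Da$. Since $\exp(tD)\in {\cal G}$ and $E$ is ${\cal G}$-invariant, $\gamma(t)\in E$ for every $t\in \R$; by continuity of $\gamma$ at $0$, we also have $\gamma(t)\in B_a\cap E$ for all $t$ in a neighborhood of $0$, where $B_a$ is a ball on which the local maximum condition holds.

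Using that $F$ is ${\cal G}$-invariant, $F(\gamma(t))=F(a)$ for all $t\in \R$. Hence, for $t$ near $0$, the local maximum condition
$$\Phi(a)+F(a)\geq \Phi(\gamma(t))+F(\gamma(t))$$
reduces to $\Phi(\gamma(t))-\Phi(a)\leq 0$. On the other hand, since $\gamma(t)\in E$ for \emph{every} $t$, the defining inequality (\ref{subdifferential condition}) of $\partial_{E}\,\Phi(a)$ yields
$$\langle d,\gamma(t)-a\rangle\leq \Phi(\gamma(t))-\Phi(a).$$
Combining the two displayed inequalities, the real-valued function
$$\phi(t):=\langle d,\gamma(t)-a\rangle$$
satisfies $\phi(0)=0$ and $\phi(t)\leq 0$ for all $t$ close to $0$; i.e., $t=0$ is a local maximum of $\phi$.

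Since $\phi$ is differentiable with $\phi^\prime(0)=\langle d,Da\rangle$, we conclude $\langle Da,d\rangle=\phi^\prime(0)=0$. As $D\in \mathrm{Lie({\cal G})}$ and $d\in \partial_{E}\,\Phi(a)$ were arbitrary, this is exactly the statement that $a$ commutes with every $d\in \partial_{E}\,\Phi(a)$ relative to ${\cal G}$. The only subtlety worth flagging is the asymmetric use of $t$: the subgradient inequality is applied globally for $t\in \R$ (exploiting the full ${\cal G}$-invariance of $E$), whereas the local maximum of $\Phi+F$ is used only on a neighborhood of $0$; no obstacle remains beyond this bookkeeping.
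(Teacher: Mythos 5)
Your proof is correct and follows essentially the same argument as the paper's: fix $d$ and $D$, use the ${\cal G}$-invariance of $E$ and $F$ together with the local maximum condition and the subgradient inequality to show $\phi(t):=\langle d,\exp(tD)a-a\rangle$ is maximized at $t=0$, then differentiate. The ``asymmetry'' you flag is not really an issue even in the paper's version, since the subgradient inequality can equally well be invoked only on the neighborhood where the local maximum holds.
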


\begin{proof}
We fix $d\in \partial_{E}\,\Phi(a)$ and $D\in \mathrm{Lie({\cal G})}$. We need to show that $\langle Da,d\rangle=0$. \\As $a$ is a local maximizer, there is an open ball $B_a$ around $a$ such that 
$$\Phi(a)+F(a)\geq \Phi(x)+F(x)\,\,\mbox{ for all}\,\, x\in B_a\cap E.$$
By continuity, there is a $\delta>0$ such that 
$t\in (-\delta,\delta)\Rightarrow \exp(tD)a\in B_a.$  From the assumed conditions on $E$ and $F$,  $\exp(tD)a\in E$ and $F\big(\exp(tD)a\big)=F(a)$; so we have
$$t\in (-\delta,\delta)\Rightarrow \Phi(a)+F(a)\geq \Phi\big (\exp(tD)a\big)+F\big(\exp(tD)a\big)=\Phi\big (\exp(tD)a\big)+F(a).$$
By canceling $F(a)$ and using (\ref{subdifferential condition}),
$$0\geq \Phi\big (\exp(tD)a\big)-\Phi(a)\geq \langle d, \exp(tD)a-a\rangle\,\,\mbox{for all}\,\, t\in (\delta,\delta).$$
This leads to $\langle d,\exp(tD)a\rangle \leq \langle d,a\rangle$ for all $t\in (-\delta,\delta)$. Considering the   derivative of $\phi(t):=\langle d,\exp(tD)a\rangle $ at $t=0$, we get $\langle Da,d\rangle=0$. 
\end{proof}

\smallgap

For comparison purposes, our next result and the following examples are stated in the setting of semi-FTvN systems. First, we introduce some spectral notions.

 \begin{definition}
 Let $(\V,\W,\lambda)$ be a semi-FTvN system, ${\cal G}$ be a closed subgroup of $\mathrm{Aut(\V)}$ $(=\mathrm{Aut(\V,\W,\lambda)})$, $E$ be a subset of $\V$, and $F:\V\rightarrow \R$. We say that 
     \begin{itemize}
         \item [(i)] 
      $E$ is  a spectral set if it is of the form $\lambda^{-1}(Q)$ for some $Q\subseteq \W$;
      \item [(ii)] $E$  is  weakly spectral if it is invariant under all automorphisms, that is, $A(E)\subseteq E$ for all $A\in \mathrm{Aut(\V)};$
      \item [(iii)] $E$  is  weakly spectral relative to ${\cal G}$ if  $A(E)\subseteq E$ for all $A\in {\cal G};$
      \item [(iv)] $F$ is a spectral function if it is of the form $F=f\circ \lambda$ for some $f:\W\rightarrow \R$;
      \item [(v)] $F$ is  weakly spectral if $F(Ax)=F(x)$ for all $x\in \V$ and $A\in \Aut(\V)$;
      \item [(vi)] $F$ is said to be weakly spectral relative to ${\cal G}$ if $F(Ax)=F(x)$ for all $x\in \V$ and $A\in {\cal G}$.
\end{itemize}
\end{definition}
We remark that every spectral set/function is weakly spectral (relative to ${\cal G})$. \\

In a semi-FTvN system $(\V,\W,\lambda)$, for any $a\in \V$, we define
{\it orbit, weak orbit, and ${\cal G}$-orbit of $a$ by} 
$$[a]:=\{x\in \V: \lambda(x)=\lambda(a)\},\,\,
[a]_w=\{Aa:A\in \mathrm{Aut(\V)}\},\,\,\mbox{and}\,\,[a]_{{\cal G}}=\{Aa:A\in {\cal G}\}.$$
Then $[a]$ is spectral,  $[a]_w$ is weakly spectral, and $[a]_{{\cal G}}$ is weakly spectral relative to ${\cal G}$. 
We always have $[a]_{{\cal G}}\subseteq [a]_w\subseteq [a]$.
\gap

 We now specialize Theorem \ref{group commutation principle} to semi-FTvN systems. 
 
 \begin{corollary} \label{VI}
     Let $(\V,\W,\lambda)$ be a semi-FTvN system, ${\cal G}$ be a closed subgroup of $\mathrm{Aut(\V,\W,\lambda)}$. Suppose, relative to ${\cal G}$, $E$ be a weakly spectral set in $\V$, and
$F:\V\rightarrow \R$ be a weakly spectral function. Let $\Theta:\V\rightarrow \R$ be Fr\'{e}chet differentiable. Then the following hold: If $a$ is a
local minimizer/maximizer of the map
$$ x\in E \rightarrow  \Theta(x)+F(x),$$
then $a$ and $\Theta^{\prime}(a)$ commute relative to ${\cal G}$; In particular,
if $a$ is an optimizer of the problem
$$ \underset{x\in E}{max/min}\, \langle c,x\rangle,$$ then $a$ and $c$ commute relative to ${\cal G}$; Furthermore, if $a$ solves the variational inequality problem $\mathrm{VI}(h,E)$, then $a$ and $h(a)$  commute relative to ${\cal G}$.
\end{corollary}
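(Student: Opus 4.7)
The plan is to obtain Corollary \ref{VI} as a straightforward specialization of Theorem \ref{group commutation principle}. The first step is to verify that the hypotheses of that theorem are in force. Since $\lambda$ is norm-preserving, every automorphism in $\mathrm{Aut}(\V,\W,\lambda)$ is an invertible orthogonal transformation on $\V$, hence in particular a bounded invertible linear transformation. Therefore the closed subgroup ${\cal G}\subseteq\mathrm{Aut}(\V,\W,\lambda)$ qualifies as a closed subgroup of the invertible bounded linear transformations on $\V$ as required by Theorem \ref{group commutation principle}. Next, I would unwind the definitions: by item (iii) of the spectral notions definition, $E$ being weakly spectral relative to ${\cal G}$ means $A(E)\subseteq E$ for every $A\in{\cal G}$, which is exactly ${\cal G}$-invariance of $E$; similarly item (vi) says $F(Ax)=F(x)$ for every $x\in\V$ and $A\in{\cal G}$, which is ${\cal G}$-invariance of $F$.

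With those verifications in hand, each of the three assertions follows by citing the corresponding part of Theorem \ref{group commutation principle}: the local extremizer statement is part $(i)$ applied to $\Theta+F$; the linear optimization conclusion is part $(ii)$ applied to $c$; and the variational inequality conclusion is part $(iii)$ applied to $h$. No additional analytic work is needed beyond invoking the theorem.

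One small bookkeeping point deserves attention. Theorem \ref{group commutation principle} concludes that $a$ \emph{commutes with} $\Theta'(a)$ (resp.\ $c$, $h(a)$) relative to ${\cal G}$, which a priori is not symmetric. In the present semi-FTvN setting, however, ${\cal G}\subseteq\mathrm{Aut}(\V)\subseteq{\cal O}(\V)$, so relation (\ref{skew-symmetry}) holds: for every $D\in\mathrm{Lie}({\cal G})$ and every $x,y\in\V$, $\langle Dx,y\rangle+\langle x,Dy\rangle=0$. This is exactly the remark immediately following (\ref{skew-symmetry}) and remark $(\beta)$ after Definition \ref{weak commutativity defn}, and it legitimizes the symmetric phrasing ``$a$ and $\Theta'(a)$ commute relative to ${\cal G}$'' used in the corollary.

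There is essentially no hard step here; the corollary is a translation. The only care required is in unpacking the three layers of ``weakly spectral relative to ${\cal G}$'' definitions and verifying they deliver the ${\cal G}$-invariance hypotheses of Theorem \ref{group commutation principle} verbatim, and in invoking the orthogonality of ${\cal G}$ to obtain the symmetric form of commutativity.
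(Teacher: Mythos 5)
Your proof is correct and matches the paper's intent exactly: Corollary \ref{VI} is stated without a separate proof precisely because it is a direct specialization of Theorem \ref{group commutation principle}, with ${\cal G}\subseteq\mathrm{Aut}(\V)\subseteq{\cal O}(\V)$ supplying both the required closed-subgroup hypothesis and (via (\ref{skew-symmetry})) the symmetry of the commutativity relation. Your unpacking of the weakly-spectral-relative-to-${\cal G}$ definitions into the ${\cal G}$-invariance hypotheses is the only bookkeeping required, and you have done it correctly.
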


 \smallgap

 \begin{example} 
 Suppose $(\V,\W,\lambda)$ is a semi-FTvN system, where $\V$ is finite dimensional. Then, for each $u\in \V$, the spectral set $[u]$ is closed and bounded (recall $\lambda$ is continuous and an isometry), hence compact. Then, for any $c\in \V$, $\underset{[u]}{\max}\, \langle c,x\rangle$ is attained, say, at $a\in [u]$. Then, $a$  commutes with $c$. We remark that when $(\V,\W,\lambda)$ is an FTvN system, one gets strong commutativity (not just commutativity), see  Section \ref{intro} and \cite[Section 1]{gowda-ftvn}. Also, in this setting, when $E$ is a spectral set and $h:E\rightarrow \V$,  if $a$ solves $\mathrm{VI}(h,E)$, then $a$ and $-h(a)$ strongly commute. This was established in  \cite[Corollary 1]{gowda-commutation-ORL}.\\
In a similar vein (analogous to Corollary \ref{convex corollary}), we have the following from \cite{gowda-ftvn}, 
Proposition 3.12: Suppose $(\V,\W,\lambda)$ is a 
finite-dimensional FTvN system, $E$ is a convex spectral set in $\V$, $f:\V\rightarrow \R$ is convex and $a$
is an optimizer of the problem $\underset{E}{\min}\,f$. Then, $a$ strongly commutes with $-c$ for some element $c$ in the subdifferential of $f$ at $a$.
\end{example}

\begin{example}
Suppose $(\V,\W,\lambda)$ is a semi-FTvN system, $K$ be a closed convex cone in $\V$ that is weakly spectral. (This could be the hyperbolicity cone corresponding to a complete hyperbolic polynomial.) Let $h:K\rightarrow \V$. Consider the {\it complementarity problem} $\mathrm{CP}(h,K)$, which is to find 
\begin{center}
$x^*\in K$ such that $h(x^*)\in K^*$ and $\langle h(x^*),x^*\rangle=0$,
\end{center}
where $K^*:=\{y\in \V: \langle y,x\rangle\geq 0,\,\forall\,x\in K\}$ is the dual of $K$.
If $x^*$ is such a solution, by Corollary \ref{VI} specialized to $E=K$, we see that $x^*$  commutes with $h(x^*)$. In particular, in the setting of a Euclidean Jordan algebra with its symmetric cone $K$, we get the operator commutativity of $x^*$ and $h(x^*)$.
\end{example}
\gap

\smallgap

\smallgap

\noindent{\bf Concluding remarks.} In this paper, we introduced two commutativity concepts in the settings of transformation groups and semi-FTvN systems, and demonstrated their appearance as optimality conditions in certain optimization problems. Additionally, we have shown that these concepts reduce to the well-known operator commutativity concepts in the setting of Euclidean Jordan algebras.  In our future work, we plan to go beyond hyperbolic polynomials to address majorization concepts/results in the setting of semi-FTvN systems. 

\section*{Acknowledgements}
D. Sossa acknowledges the support of  FONDECYT (Chile)  through grant 11220268, and MATH-AMSUD 23-MATH-09 MORA-DataS project.



\begin{thebibliography}{}

\bibitem{baes} M. Baes, \emph{Convexity and differentiability properties of spectral functions and spectral mappings on Euclidean Jordan algebras}, Linear Algebra Appl., 422 (2007)  664-700.

\bibitem{bauschke et al} H.H. Bauschke, O. Güler, A.S. Lewis, and H.S. Sendov, \emph{Hyperbolic polynomials and convex analysis},
Can. J. Math. 53, (2001) 470–488.

\bibitem{bhatia} R. Bhatia, \emph{Matrix Analysis},	Springer, New York, 1997.

\bibitem{faraut-koranyi} J. Faraut and A. Korányi, \emph{ Analysis on Symmetric Cones}, Clarendon Press, Oxford (1994).

\bibitem{garding} L. Garding, \emph{An inequality for hyperbolic polynomials,} J. Math. Mechanics,  8 (1959) 957-965. 

\bibitem{gowda et al - lie algebra} M.S. Gowda, R. Sznajder, and J. Tao, \emph{The automorphism group of a completely positive cone and its Lie algebra}, Linear Algebra Appl., 438 (2013) 3862-3871.

\bibitem{gowda-ftvn} M.S. Gowda,  \emph{Optimizing certain combinations of spectral and linear/distance over spectral sets }, arXiv:
1902.06640 (2019).

\bibitem{gowda-commutation-ORL} M.S. Gowda, \emph{Commutation principles for optimization problems on
spectral sets in Euclidean Jordan algebras}, Optim. Lett., 16 (2022) 1119-1128.


\bibitem{gowda-jeong-siopt} M.S. Gowda and J. Jeong, \emph{Commutation principles in Euclidean Jordan algebras and normal decomposition systems}, SIAM J. Optim. 27 (2017) 1390-1402.

\bibitem{gowda-jeong-connected} M.S. Gowda and  J. Jeong, \emph{ On the connectedness of spectral sets and irreducibility of spectral cones in Euclidean Jordan algebras}, Linear Algebra Appl., 559 (2018) 181-193.

\bibitem{gowda-jeong-ftvn} M.S. Gowda and J. Jeong, \emph{Commutativity, majorization, and reduction in Fan-Theobald-von Neumann systems}, Results Math., (2023) 78:72.



\bibitem{gurvitz} 
L. Gurvits, \emph{Combinatorics hidden in hyperbolic polynomials and related topics},
arXiv:math/0402088v1 (2004).

\bibitem{hall} B.C. Hall, \emph{Lie Groups, Lie Algebras, and Representations}, Graduate Texts in Mathematics 222, Springer-Verlag, New York, 2003.

\bibitem{helton-vinnikov} J. W. Helton and V. Vinnikov, \emph{Linear Matrix Inequalities Representation of
Sets}, Communications on Pure and Applied Mathematics, 60 (2007) 654-674.



\bibitem{horn-johnson} R.A. Horn and C.R. Johnson, \emph{Matrix Analysis}, Cambridge University Press, Cambridge, 1985.

\bibitem{horn-johnson-topics} R.A. Horn and C.R. Johnson, \emph{Topics in Matrix Analysis}, Cambridge University Press, Cambridge, 1991.


\bibitem{ito-lorenzo} M. Ito and B. F. Lourenço, \emph{Eigenvalue programming beyond matrices}, Comp. Opt.  Appl.,  89 (2024) 361-384.

\bibitem{jeong-gowda-ftvn} J. Jeong and M.S. Gowda, \emph{Transfer principles, Fenchel conjugate, and subdifferential formulas in FTvN systems}, J. Optim. Theory Appl.,  202 (2024) 1242–1267.

\bibitem{jeong-sossa-arxiv2024} J. Jeong and D. Sossa, \emph{Commutation principles for nonsmooth variational problems on Euclidean Jordan algebras}, arXiv:2403.09578.


\bibitem{koecher} M. Koecher, \emph{The Minnesota Notes on Jordan Algebras and their Applications}, Lecture Notes in Mathematics, Springer, Berlin,
1999.

\bibitem{lewis} A.S. Lewis, \emph{Group invariance and convex matrix analysis}, SIAM J. Matrix Anal. Appl.
17, (1996) 927-949.

\bibitem{lewis-sendov} A.S. Lewis and H.S. Sendov, \emph{Nonsmooth analysis of singular values, Part I: Theory}, Set-Valued Anal., 13 (2005) 213-241.

\bibitem{lewis et al} 
A. S. Lewis, P. A. Parillo, and M. V. Ramana, \emph{The Lax Conjecture is
True}, Proc. AMS, 133 (2005) 2495-2499.

 

\bibitem{Lourenco-hyperbolic cone amenable} B. F. Lourenço, V. Roshchina, and J. Saunderson, \emph{ Hyperbolicity cones are amenable}, Math. Program., 204 (2024) 753-764.

\bibitem{massey et al} P. G. Massey, N. B. Rios, and D. Stojanoff, \emph{Local Lidskii’s theorems for unitarily invariant
norms}, Linear Algebra Appl., 557 (2018) 34-61.

\bibitem{ramirez et al} H. Ramirez, A. Seeger, and D. Sossa, \emph{Commutation Principle for Variational Problems on Euclidean Jordan Algebras}, SIAM J. Opt., 
23 (2013) 687-694.

\bibitem{renegar hyperbolic program} J. Renegar, \emph{Hyperbolic programs, and their derivative relaxations}, Found. Comput.Math., 6 (2006), 59–79.

\bibitem{Rock} R.T. Rockafellar,  \emph{Convex 
Analysis}, Princeton University Press, 1970.

 \bibitem{schneider-vidyasagar}
 H. Schneider and M. Vidyasagar, \emph{Cross-positive matrices}, SIAM J. Numer. Anal. 7 (1970) 508-519.
 
\bibitem{seeger-orbital} A. Seeger, \emph{Orbital equations for extremal problems
 under various invariance assumptions}, Set-Valued Var. Anal. 21 (2013) 503–516.

\bibitem{stillwell} J. Stillwell, \emph{Naive Lie Theory}, Undergraduate Texts in Mathematics, Springer, New York, 2008.

 \bibitem{tao-gowda-representation} J. Tao and M.S. Gowda, \emph{A representation theorem for Lyapunov-like transformations on Euclidean Jordan algebras}, International Game Theory Review, 13 (2013) 1340034*1-11.

\end{thebibliography}
\end{document}